\documentclass[12pt,a4paper]{article}
\usepackage{amsmath}
\usepackage{amssymb}
\usepackage{cases}
\usepackage{bbm}
\usepackage{mathrsfs}
\usepackage{graphicx}
\usepackage{amsfonts}

\usepackage{verbatim}
\usepackage{enumerate}
\usepackage{theorem}
\usepackage{color}
\usepackage{cite}
\makeatletter
\def\tank#1{\protected@xdef\@thanks{\@thanks
 \protect\footnotetext[0]{#1}}}
\def\bigfoot{

 \@footnotetext}
\makeatother

\topmargin=-10mm \oddsidemargin=-1mm \evensidemargin=-1mm
\textwidth=165mm \textheight=24.5cm

\newcommand{\ea}{\end{array}}

\allowdisplaybreaks

\newtheorem{theorem}{Theorem}[section]
\newtheorem{lem}{Lemma}[section]
\newtheorem{prp}[theorem]{Proposition}
\newtheorem{thm}[theorem]{Theorem}
\newtheorem{cor}[theorem]{Corollary}
\newtheorem{dfn}[theorem]{Definition}

\newtheorem{remark}{Remark}[section]

\newtheorem{Assumption}{Assumption}[section]

\def\beq{\begin{equation}}
\def\nneq{\end{equation}}

\def\bthm{\begin{thm}}
\def\nthm{\end{thm}}

\def\blem{\begin{lem}}
\def\nlem{\end{lem}}
\def\bprf{\begin{proof}}
\def\nprf{\end{proof}}
\def\bprop{\begin{prop}}
\def\nprop{\end{prop}}
\def\brmk{\begin{rem}}
\def\nrmk{\end{rem}}

\def\bexa{\begin{exa}}
\def\nexa{\end{exa}}
\def\bcor{\begin{cor}}
\def\ncor{\end{cor}}

{\theorembodyfont{\rmfamily}
}

\title{Large deviations of conservative stochastic partial differential equations}

\footnotesize{\author{
Ping Chen$^{1,}$\thanks{chenping@mail.ustc.edu.cn},\ \
 Tusheng Zhang$^{2,}$\thanks{Tusheng.Zhang@manchester.ac.uk}\\
 {\em $^1$ School of Mathematical Sciences,}\\
 {\em University of Science and Technology of China,}\\
 {\em Hefei, 230026, China.}\\
 {\em $^2$  Department of Mathematics,}\\
 {\em University of Manchester,}\\
 {\em Oxford Road, Manchester, M13 9PL, UK.}\\
}
\date{}
\newenvironment{proof}{\par\noindent{\bf Proof:}}{\hspace*{\fill}$\blacksquare$\par}
\begin{document}
\maketitle
\noindent \textbf{Abstract:}
In this paper, we establish a large deviation principle for the conservative stochastic partial differential equations, whose solutions are related to stochastic differential equations with interaction. The weak convergence method and the contraction principle in the theory of large deviations  play an important role.


\vspace{4mm}


\vspace{3mm}
\noindent \textbf{Key Words:}
Large deviation principle; conservative stochastic partial differential equations; stochastic differential equations with interaction; weak convergence method.
\numberwithin{equation}{section}
\vskip 0.3cm
\noindent \textbf{AMS Mathematics Subject Classification:} Primary 60H15, 60F10; Secondary 60G57.

\section{Introduction}
Let $T \in (0, \infty)$ be fixed. In this paper, we study the small noise large deviation principle (LDP) of conservative stochastic partial differential equations (SPDEs), which is written as follows:
\begin{eqnarray}\label{I-eq1.2}
\begin{split}
\mathrm{d}\mu_t &= \frac{1}{2}D^2:(A(t, \cdot, \mu_t)\mu_t)\mathrm{d}t - \nabla \cdot (V(t, \cdot, \mu_t)\mu_t)\mathrm{d}t \\
                &\quad - \int_{\Theta} \nabla \cdot (G(t, \cdot, \mu_t, \theta)\mu_t)W(\mathrm{d}\theta, \mathrm{d}t),
\end{split}
\end{eqnarray}
where $\{W_t\}_{t \in [0,T]}$ is a cylindrical Wiener process on some $L^2$ space $L^2(\Theta, \vartheta)$ defined on a complete probability space $(\Omega, \mathcal{F}, \mathbb{P})$. For the precise conditions on $A$, $V$ and $G$, we refer the readers to Section \ref{sec:preliminaries}.

Conservative SPDEs arise as fluctuating continuum models for interacting particle system \cite{GLP}, and have been used to describe the hydrodynamic large deviations of simple exclusion and zero range particle processes, see, for instance, \cite{QRV} and \cite{BKL}. In \cite{FG}, the authors obtained the well-posedness of conservative SPDEs with correlated noise. In \cite{CG}, the authors adopted a duality argument to establish the well-posedness of measure-valued solutions to the nonlinear, nonlocal stochastic Fokker-Planck equations. Additionally, motivated from fluid dynamics in vorticity form, the case of signed measure-valued solutions has been considered in \cite{RV,AX,KPM1} and the references therein. In a recent work \cite{GGK}, the authors showed that under sufficiently nice initial distribution $\mu_0 \in \mathcal{P}_2(\mathbb{R}^{d})$ and appropriate conditions on the coefficients, the each solution to \eqref{I-eq1.2} was given as a superposition of solutions to the following stochastic differential equation (SDE) with interaction:
\begin{align}
\left\{
\begin{aligned}\label{I-eq1.1}
\mathrm{d}X_t(x)&=V(t,X_t(x),\bar{\mu}_t)\mathrm{d}t+\int_{\Theta}G(t,X_t(x),\bar{\mu}_t,\theta)W(\mathrm{d}\theta, \mathrm{d}t),\\
 X_0(x)&=x,\quad \bar{\mu}_t=\mu_{0}\circ(X_t(\cdot))^{-1},\quad x \in \mathbb{R}^{d},\quad t\in[0,T].
\end{aligned}
\right.
\end{align}

SDEs with interaction have been widely studied since the work by Dorogovtsev in \cite{DAA}. In \cite{LMP}, the authors studied the limit behavior of solutions to SDEs with interaction in one-dimensional case. This has been extended to two-demensional case in \cite{MAB}. In \cite{PAY}, the authors proved a Stroock-Varadhan-type support theorem for stochastic flows generated by SDEs with interaction. In \cite{JD}, the authors established the well-posedness of backward stochastic differential equations. We also refer to readers to \cite{JD1,DO} and references therein for further related works.

Large deviations principles (LDP) provide  exponential estimates for the probability of the rare events in terms of some explicit rate function.
It has wide applications in statistics, statistic mechanics, stochastic dynamical systems, financial mathematics etc.  The early framework was introduced by Varadhan in \cite{Varadhan1966Asymptotic}, in which the small noise (also called Freidlin-Wentzell) LDP for finite dimensional diffusion processes were studied. The literature on large deviations is huge, and a list of them can be found, e.g.,  in \cite{WYJT}.

The purpose of this paper is to establish a Freidlin-Wentzell type LDP for the conservative SPDE \eqref{I-eq1.2}. To obtain the LDP of the solutions, we first establish the LDP of the associated SDEs with interaction \eqref{I-eq1.1} and then use the contraction principle in the theory of large deviations. We will adopt the weak convergence method in large deviations introduced in \cite{BDA}. More precisely, we will use the more convenient sufficient conditions given in the paper \cite{MSW}.

The remaining part of this paper is organized as follows. In Section \ref{sec:preliminaries}, we introduce the basic notations and assumptions, recall well-posedness results for the SDE with interaction \eqref{I-eq1.1} and conservative SPDE \eqref{I-eq1.2}. Moreover, we characterize  the trajectory space of the solutions of the SDEs with interaction \eqref{I-eq1.1}. The trajectory space will be the space on which the large deviation is established.  In Section \ref{sec:mainresults}, we present the main results of the paper, and then outline how the results will be proved. Section \ref{LDPcondition2} and Section \ref{LDPcondition1} are devoted to the proof of the LDP for the SDE with interaction \eqref{I-eq1.1}.


\section{Preliminaries}\label{sec:preliminaries}

Throughout this paper, we use the following notations and assumptions.

For $n \in \mathbb{N}$, let $[n]:= \{ 1, \cdots, n  \}$.
For vectors $a$, $b \in \mathbb{R}^{d}$ and $A$, $B \in \mathbb{R}^{d \times d}$, we set $a \cdot b = \sum_{i =1}^{d} a_i b_i$, $|a|= \sqrt{a \cdot a}$,
and $A : B =\sum_{i, j=1}^{d}a_{i,j}b_{i,j}$.
For $a\in \mathbb{R}^{d}$ and $r>0$, denote by $B(a,r)$ the open ball of radius $r$ centred at $a$ in $\mathbb{R}^{d}$.
We use the letter $C_p$ to denote  a generic positive constant depending on some parameter $p$, whose value may change from line to line.

Let $C^{2}(\mathbb{R}^{d}, \mathbb{R})$ be the space of twice continuously differentiable functions from $\mathbb{R}^{d}$ to $\mathbb{R}$.
Let $C_{c}^{2}(\mathbb{R}^{d}, \mathbb{R})$ denote the subspace of $C^{2}(\mathbb{R}^{d}, \mathbb{R})$ of all compactly supported functions.
For $\varphi$, $f_i \in C^{2}(\mathbb{R}^{d}, \mathbb{R})$, $i \in [d]$, define $\nabla \varphi$ and $D^2 \varphi$ as the gradient and Hessian matrix of $\varphi$, respectively, denote by $\nabla \cdot f = \sum_{i=1}^{d}\partial{f_i} / \partial_{x_i}$, where $f = (f_i)_{i \in [d]}$.
Let $C(\mathbb{R}^{d})$ denote the space of continuous functions from $\mathbb{R}^{d}$ to $\mathbb{R}^{d}$.
For some $\delta \in (0, \frac{1}{3})$, let
\[ C_{\delta}(\mathbb{R}^{d}):= \{ f \in C(\mathbb{R}^{d}): \sup_{x \in \mathbb{R}^{d}} \frac{|f(x)|}{1+|x|^{1+\delta}}< \infty   \}.  \]
Let $C([0,T], C_{\delta}(\mathbb{R}^{d}))$ be the space of continuous functions from $[0, T]$ to $C_{\delta}(\mathbb{R}^{d})$, equipped with the maximum norm, defined as follows
\begin{equation}\label{0.1}
 \| f \|_{\infty, T}:= \sup_{t \in [0,T]} \sup_{x \in \mathbb{R}^{d}} \frac{|f(t,x)|}{1+|x|^{1+\delta}},\ \ f \in C([0,T], C_{\delta}(\mathbb{R}^{d})).
 \end{equation}

For $p \geq 1$, let $\mathcal{P}_{p}(\mathbb{R}^{d})$ denote the set of all probability measures on $(\mathbb{R}^{d}, \mathcal{B}(\mathbb{R}^{d}))$ with finite $p$-moment. It is well known that $\mathcal{P}_{p}(\mathbb{R}^{d})$ is a polish space under the Wasserstein $p$-distance
\[ \mathcal{W}_{p}(\mu, \nu)
= \inf_{\pi \in \Pi(\mu, \nu)}(\int_{\mathbb{R}^{d} \times \mathbb{R}^{d}} |x-y|^{p} \pi(\mathrm{d}x, \mathrm{d}y))^{\frac{1}{p}},
\ \mu , \nu \in \mathcal{P}_{p}(\mathbb{R}^{d}),  \]
where $\Pi(\mu, \nu)$ denotes all probability measures on $\mathbb{R}^{d} \times \mathbb{R}^{d}$ with marginals $\mu$ and $\nu$.
Define $\langle \varphi, \mu \rangle$ as the integration of a function $\varphi: \mathbb{R}^{d} \rightarrow \mathbb{R}$ with respect to a measure $\mu \in \mathcal{P}_{p}(\mathbb{R}^{d})$.

Given a measure space $(\Theta, \mathcal{G}, \vartheta)$, denote by $L^2(\Theta, \vartheta):= L^2(\Theta, \mathcal{G}, \vartheta)$ the usual space of all square integrable functions from $\Theta$ to $\mathbb{R}$, equipped with the inner product and norm, which are denoted by $\langle \cdot, \cdot \rangle_{\vartheta}$ and $\| \cdot \|_{\vartheta}$, respectively.
Let $\{W_t\}_{t \in [0, T]}$ be a cylindrical Wiener process on $L^2(\Theta, \vartheta)$ defined on a complete probability space
$(\Omega, \mathcal{F}, \mathbb{F}, \mathbb{P})$ with the augmented filtration $\mathbb{F}:=\{{\mathcal{F}}_t\}_{t \in [0, T]}$ generated by $\{ W_t \}_{t \in [0, T]}$. For an $(\mathcal{F}_t)$-progressively measurable $L^2(\Theta, \vartheta)$-valued process $g(t, \cdot)$, $t \in [0, T]$, with
\[ \int_0^t \| g(s, \cdot) \|_{\vartheta}^2 \mathrm{d}s < \infty  \quad a.s. \]
for every $t \in [0, T]$, the following Ito-type stochastic integral
\[ \int_0^t \int_{\Theta}g(s,\theta)W(\mathrm{d}\theta, \mathrm{d}s):= \int_0^t \Upsilon(s)\mathrm{d}W_s,   \]
is well defined,  where the operator-valued process $ \Upsilon(s): L^2(\Theta, \vartheta)\rightarrow \mathbb{R}$ is defined by
$\Upsilon(s)h = \langle g(s, \cdot), h\rangle_{\vartheta} $ for $h\in L^2(\Theta, \vartheta)$, $s \in [0, T]$.

Now we give the assumptions on the coefficients $V$, $G$ and $A$ appearing in equations \eqref{I-eq1.2} and \eqref{I-eq1.1}.
Let $V: [0,T] \times \mathbb{R}^{d} \times \mathcal{P}_2(\mathbb{R}^{d}) \rightarrow \mathbb{R}^{d}$ ,
$G: [0,T] \times \mathbb{R}^{d} \times \mathcal{P}_2(\mathbb{R}^{d}) \rightarrow (L_2(\Theta, \vartheta))^{d}$
and $A: [0, T] \times \mathbb{R}^{d} \times \mathcal{P}_2(\mathbb{R}^{d}) \rightarrow \mathbb{R}^{d \times d}$
are $\mathcal{B}([0,T]) \otimes \mathcal{B}(\mathbb{R}^{d})  \otimes  \mathcal{B}(\mathcal{P}_2(\mathbb{R}^{d}))$-measurable functions.

\begin{Assumption}\label{Assumption1}
For all $t \in [0, T]$, $x \in \mathbb{R}^{d}$ and $\mu \in \mathcal{P}_2(\mathbb{R}^{d})$
\[  A(t, x, \mu) = ( \langle G_{i}(t, x, \mu, \cdot), G_{j}(t, x, \mu, \cdot) \rangle_{\vartheta})_{i, j\in [d]}.  \]
\end{Assumption}

\begin{Assumption}\label{Assumption2}
The coefficients $V$ and $G$ are Lipschitz continuous with respect to $x$ and $\mu$, that is, there exists $L > 0$ such that for every $t \in [0,T]$, $x$, $y\in \mathbb{R}^{d}$ and $\mu$, $\nu \in \mathcal{P}_2(\mathbb{R}^{d})$
\[ |V(t, x, \mu) - V(t, y, \nu)| + \| |G(t, x, \mu) - G(t, y, \nu) |  \|_{\vartheta} \leq L(|x - y| + \mathcal{W}_2(\mu, \nu)),  \]
and
\[ |V(t, 0, \delta_{0} )| + \| |G(t, 0, \delta_{0})| \|_{\vartheta} \leq L ,  \]
where $\delta_{0}$ denotes the $\delta$-measure at 0 on $\mathbb{R}^{d}$.
\end{Assumption}

\begin{remark}\label{Remark1}
Assumption \ref{Assumption2} implies that for all $t \in [0,T]$, $x \in \mathbb{R}^{d}$
and $\mu \in \mathcal{P}_2(\mathbb{R}^{d})$, the coefficients $V$ and $G$ satisfy
\[ |V(t, x, \mu)| + \| |G(t, x, \mu)| \|_{\vartheta} \leq L(1+ |x| + \mathcal{W}_2(\mu, \delta_{0})),  \]
\end{remark}

Following  \cite[Definition 2.2, 2.5 and 2.6] {GGK}, we introduce the definitions of solutions for equations \eqref{I-eq1.2} and \eqref{I-eq1.1}, and the superposition principle.

\begin{dfn}\label{definition2}
Let $\mu_{0} \in \mathcal{P}_2(\mathbb{R}^{d})$. A continuous $(\mathcal{F}_t)$-adapted process $\{\mu_t\}_{t \in [0, T]}$, in $\mathcal{P}_2(\mathbb{R}^{d})$, is a solution to the conservative SPDE \eqref{I-eq1.2} started from $\mu_0$ if for every
$ \varphi \in C^2_c(\mathbb{R}^{d}, \mathbb{R})$, a.s. the equality
\begin{equation}\label{II-eq2.1}
\begin{split}
\langle \varphi, \mu_{t} \rangle &= \langle \varphi, \mu_{0} \rangle + \frac{1}{2} \int_0^t \langle D^2\varphi: A(s, \cdot, \mu_{s}), \mu_{s}\rangle \mathrm{d}s\\
                                 &+ \int_0^t\langle \nabla\varphi \cdot V(s, \cdot, \mu_{s}), \mu_{s} \rangle \mathrm{d}s + \int_0^t\int_{\Theta}
                                 \langle \nabla\varphi \cdot G(s, \cdot, \mu_{s}, \theta), \mu_{s} \rangle W(\mathrm{d}\theta, \mathrm{d}s)
\end{split}
\end{equation}
holds for every $t \in [0, T]$.

\end{dfn}

\begin{dfn}\label{definition1}
A family of continuous processes $\{ X_t(x) \}_{t \in [0,T]}$, $x \in \mathbb{R}^{d}$, is called a solution to the SDE with interaction \eqref{I-eq1.1} with initial mass distribution $\mu_{0} \in \mathcal{P}_2(\mathbb{R}^{d})$ if the restriction of $X$ to the time interval $[0, t]$ is $\mathcal{B}([0, t]) \otimes \mathcal{B}(\mathbb{R}^{d}) \otimes \mathcal{F}_t$-measurable, $\bar{\mu}_t = \mu_{0}\circ(X_t(\cdot))^{-1} \in \mathcal{P}_2(\mathbb{R}^{d})$ a.s. for all $t \in [0,T]$ and for every $x \in \mathbb{R}^{d}$, a.s.
\[ X_t(x) = x + \int_0^t V(s, X_s(x), \bar{\mu}_s)\mathrm{d}s + \int_0^t \int_{\Theta} G(s, X_s(x), \bar{\mu}_s, \theta)W(\mathrm{d}\theta, \mathrm{d}s)\] for all $t \in [0,T]$.
\end{dfn}

\begin{dfn}\label{definition3}
A continuous process $\{ \mu_{t} \}_{t \in [0, T]}$, in $\mathcal{P}_2(\mathbb{R}^{d})$ started from $\mu_{0}$ is a  superposition solution to the conservative SPDE \eqref{I-eq1.2} or satisfies the superposition principle if there exists a solution $X_{t}(x)$, $t \in [0, T]$, $x \in \mathbb{R}^{d}$, to the SDE with interaction \eqref{I-eq1.1} such that $\mu_{t} = \mu_{0}\circ (X_t(\cdot))^{-1}$, $t \in [0, T]$, a.s.
\end{dfn}

With these definitions, we recall the following results from \cite{GGK}.

\begin{prp}\label{proposition1}
Let Assumption \ref{Assumption2} hold. Then for every $\mu_{0} \in \mathcal{P}_2(\mathbb{R}^{d})$, the SDE with interaction \eqref{I-eq1.1} has a unique solution $X_t(x)$, $t \in [0,T]$, $x \in \mathbb{R}^{d}$. Moreover, there exists a version of $X_{\cdot}(x)$, $x \in \mathbb{R}^{d}$, that is a continuous in $(t, x)$, and for each $p \geq 2$, there exists a constant $C_{L, p, d, T} > 0$ such that for all $x$, $y \in \mathbb{R}^{d}$
\begin{equation}\label{II-eq2.4}
\mathbb{E}[\sup_{t \in [0,T]} | X_t(x)|^p] \leq C_{L, p, d, T}( 1 + \int_{\mathbb{R}^{d}} |x|^{p} \mu_0(\mathrm{d}x) + |x|^{p} ),
\end{equation}
and
\begin{equation}\label{II-eq2.5}
\mathbb{E}[\sup_{t \in [0,T]} | X_t(x) - X_t(y) |^p ] \leq C_{L, p, d, T}|x - y|^p.
\end{equation}
\end{prp}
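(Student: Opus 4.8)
The statement is quoted from \cite{GGK}; here is how the argument runs. The plan is to build the solution by a Picard iteration and to isolate the Banach space in which the interaction term is well behaved. The key structural observation is that, since $\bar{\mu}_t=\mu_{0}\circ(X_t(\cdot))^{-1}$, for any two flows $X,Y$ the pushforward of $\mu_0$ under $x\mapsto(X_t(x),Y_t(x))$ is a coupling of $\mu_0\circ(X_t(\cdot))^{-1}$ and $\mu_0\circ(Y_t(\cdot))^{-1}$, so that
\[
\mathcal{W}_2\big(\mu_{0}\circ(X_t(\cdot))^{-1},\,\mu_{0}\circ(Y_t(\cdot))^{-1}\big)^2\le\int_{\mathbb{R}^{d}}|X_t(x)-Y_t(x)|^2\,\mu_0(\mathrm{d}x).
\]
In particular $\mathcal{W}_2(\mu_{0}\circ(X_t(\cdot))^{-1},\delta_0)^2=\int_{\mathbb{R}^{d}}|X_t(x)|^2\,\mu_0(\mathrm{d}x)$. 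Thus the measure argument is controlled by $L^2(\mathbb{R}^d,\mu_0)$-norms of the flow, and the $x$- and $\mu$-Lipschitz hypothesis of Assumption \ref{Assumption2} will drive a contraction in $C([0,T];L^2(\mathbb{R}^d,\mu_0))$.

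Set $X_t^{0}(x)=x$ and define recursively $X_t^{n+1}(x)=x+\int_0^t V(s,X_s^{n}(x),\bar{\mu}_s^{n})\,\mathrm{d}s+\int_0^t\int_{\Theta}G(s,X_s^{n}(x),\bar{\mu}_s^{n},\theta)W(\mathrm{d}\theta,\mathrm{d}s)$, where $\bar{\mu}_s^{n}=\mu_{0}\circ(X_s^{n}(\cdot))^{-1}$. The first step is an \emph{a priori} moment bound uniform in $n$: by the Burkholder--Davis--Gundy inequality, Assumption \ref{Assumption1} (which identifies the quadratic variation of the stochastic integral with the matrix $A$) and the linear growth bound of Remark \ref{Remark1},
\[
\mathbb{E}\Big[\sup_{r\le t}|X_r^{n+1}(x)|^p\Big]\le C_{p,d,L,T}\Big(1+|x|^p+\int_0^t\mathbb{E}\big[\sup_{r\le s}|X_r^{n}(x)|^p\big]\,\mathrm{d}s+\int_0^t\mathcal{W}_2(\bar{\mu}_s^{n},\delta_0)^p\,\mathrm{d}s\Big).
\]
Using $\mathcal{W}_2(\bar{\mu}_s^{n},\delta_0)^p=\big(\int|X_s^{n}(y)|^2\mu_0(\mathrm{d}y)\big)^{p/2}\le\int|X_s^{n}(y)|^p\mu_0(\mathrm{d}y)$ (Jensen, $p\ge2$), integrating this inequality in $x$ against $\mu_0$ gives a closed Gronwall inequality for $s\mapsto\int\mathbb{E}[\sup_{r\le s}|X_r^{n}(x)|^p]\mu_0(\mathrm{d}x)$, uniform in $n$ (and finite since $\mu_0\in\mathcal{P}_2(\mathbb{R}^d)$); substituting the resulting bound back into the displayed inequality and applying Gronwall once more in $x$ yields \eqref{II-eq2.4} with exactly the stated dependence on $|x|^p$ and $\int|x|^p\mu_0(\mathrm{d}x)$.

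The iteration is Cauchy: with $\Delta_t^{n}(x)=X_t^{n+1}(x)-X_t^{n}(x)$, the same tools together with the Lipschitz bound give
\[
\mathbb{E}\Big[\sup_{r\le t}|\Delta_r^{n}(x)|^2\Big]\le C\int_0^t\Big(\mathbb{E}\big[\sup_{r\le s}|\Delta_r^{n-1}(x)|^2\big]+\int_{\mathbb{R}^{d}}\mathbb{E}\big[\sup_{r\le s}|\Delta_r^{n-1}(y)|^2\big]\,\mu_0(\mathrm{d}y)\Big)\,\mathrm{d}s.
\]
Writing $a_n(t,x)=\mathbb{E}[\sup_{r\le t}|\Delta_r^{n}(x)|^2]$ and $\bar a_n(t)=\int a_n(t,y)\mu_0(\mathrm{d}y)$, integration against $\mu_0$ yields $\bar a_n(t)\le C\int_0^t\bar a_{n-1}(s)\,\mathrm{d}s$, hence $\bar a_n(t)\le(Ct)^n\bar a_0(T)/n!$ with $\bar a_0(T)<\infty$; feeding this back, a straightforward induction gives $a_n(t,x)\le K(1+|x|^2)(Ct)^n/n!$, so $\sum_n a_n(t,x)^{1/2}<\infty$ for every $x$. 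Therefore $X^n$ converges, locally uniformly in $x$ and in $L^2$, to a limit $X$ which, passing to the limit in the defining identity (the Wasserstein coupling bound again handles the convergence of $\bar\mu^n$), solves \eqref{I-eq1.1}; uniqueness follows by the identical Gronwall estimate applied to the difference of two solutions.

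Finally, the existence of a jointly continuous version and the bound \eqref{II-eq2.5} follow from a two-parameter moment estimate. The spatial increment is treated exactly as the Cauchy estimate above, but applied to the difference $X_t(x)-X_t(y)$ of the genuine solutions started at $x$ and $y$: after integrating against $\mu_0$ and using Gronwall one obtains $\mathbb{E}[\sup_{t\le T}|X_t(x)-X_t(y)|^p]\le C|x-y|^p$, i.e.\ \eqref{II-eq2.5}. The temporal increment $\mathbb{E}[|X_t(x)-X_s(x)|^p]\le C|t-s|^{p/2}$ follows from the Burkholder--Davis--Gundy inequality applied to the two integral increments on $[s,t]$, together with the moment bound \eqref{II-eq2.4}. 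Combining these, $\mathbb{E}[|X_t(x)-X_s(y)|^p]\le C(|t-s|^{p/2}+|x-y|^p)$, and the Kolmogorov--Chentsov criterion on $[0,T]\times\mathbb{R}^d$ (with $p$ taken large relative to $d+1$) produces the version continuous in $(t,x)$. The one point requiring care throughout is the self-consistency of the interaction term: all Gronwall/Picard estimates must first be carried out for the $\mu_0$-averaged quantities — which is precisely where the pushforward coupling bound above is used — before specialising to a fixed starting point $x$, since the constants genuinely grow with $|x|$.
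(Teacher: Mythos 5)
The paper does not prove Proposition~\ref{proposition1}: immediately before the statement it says ``we recall the following results from \cite{GGK},'' and the proposition is quoted from Theorems~2.9 and~2.14 of that reference, so there is no in-paper proof to compare against. Your reconstruction follows the standard Picard--Gronwall--Kolmogorov route, and the central structural device — bounding $\mathcal{W}_2(\mu_0\circ X_t^{-1},\mu_0\circ Y_t^{-1})$ by the $L^2(\mu_0)$-norm of $X_t-Y_t$ via the pushforward coupling, so that the interaction term is absorbed into a Gronwall estimate for the $\mu_0$-averaged quantity — is exactly the mechanism that makes the self-referential measure tractable, and is indeed the one used in \cite{GGK}. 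So the proposal is a faithful sketch of the cited argument.

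One conceptual simplification you missed: for the spatial increment bound \eqref{II-eq2.5} you do not need the $\mu_0$-averaging trick. The processes $X_t(x)$ and $X_t(y)$ are two points of the \emph{same} flow, hence they share the \emph{same} interaction measure $\bar\mu_t$; the measure-Lipschitz contribution in Assumption~\ref{Assumption2} is $\mathcal{W}_2(\bar\mu_t,\bar\mu_t)=0$, so a direct Gronwall in $t$ for fixed $x,y$ already yields \eqref{II-eq2.5}. The averaging step is only genuinely needed where the two processes carry \emph{different} measures — the a priori bound \eqref{II-eq2.4}, the Cauchy estimate for the Picard iterates ($\bar\mu^{n}$ versus $\bar\mu^{n-1}$), and uniqueness. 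Relatedly, the Jensen step $\mathcal{W}_2(\bar\mu_s,\delta_0)^p\le\int|X_s(y)|^p\mu_0(\mathrm{d}y)$ closes the Gronwall loop only if $\mu_0\in\mathcal{P}_p(\mathbb{R}^d)$; it is consistent with the form of \eqref{II-eq2.4}, which is vacuous otherwise, but a slightly sharper route is to first settle $p=2$ (which only needs $\mu_0\in\mathcal{P}_2$) and then insert $\mathcal{W}_2(\bar\mu_s,\delta_0)^p=\bigl(\int|X_s(y)|^2\mu_0(\mathrm{d}y)\bigr)^{p/2}$ into the $p$-th moment Gronwall, so no higher moments of $\mu_0$ are required at that stage.
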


\begin{prp}\label{proposition6}
Let Assumptions \ref{Assumption1} and \ref{Assumption2} hold. Then for every $\mu_{0} \in \mathcal{P}_2(\mathbb{R}^{d})$, there exists a unique superposition solution $\{\mu_{t}\}_{t \in [0, T]}$ to the conservative SPDE \eqref{I-eq1.2} started from $\mu_{0}$.
\end{prp}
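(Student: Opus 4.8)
The plan is to produce the superposition solution explicitly from the solution of the SDE with interaction supplied by Proposition~\ref{proposition1}, and then read off uniqueness from the uniqueness in that proposition. Let $\{X_t(x)\}_{t\in[0,T]}$, $x\in\mathbb{R}^{d}$, be the unique solution of \eqref{I-eq1.1} with initial mass distribution $\mu_0$, and set $\mu_t:=\bar{\mu}_t=\mu_0\circ(X_t(\cdot))^{-1}$. Adaptedness of $\{\mu_t\}$ follows from the measurability hypothesis on $X$ in Definition~\ref{definition1} (the maps $\mu\mapsto\langle g,\mu\rangle$, $g$ bounded continuous, generate the Borel $\sigma$-algebra of $(\mathcal{P}_2(\mathbb{R}^{d}),\mathcal{W}_2)$). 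From \eqref{II-eq2.4} and $\mu_0\in\mathcal{P}_2(\mathbb{R}^{d})$ one gets $\int_{\mathbb{R}^{d}}\mathbb{E}[\sup_{t\in[0,T]}|X_t(x)|^2]\,\mu_0(\mathrm{d}x)<\infty$, hence a.s.\ $\sup_{t\in[0,T]}\int_{\mathbb{R}^{d}}|y|^2\,\mu_t(\mathrm{d}y)<\infty$ and $\mu_t\in\mathcal{P}_2(\mathbb{R}^{d})$ for every $t$; moreover $\mathcal{W}_2(\mu_t,\mu_s)^2\le\int_{\mathbb{R}^{d}}|X_t(x)-X_s(x)|^2\,\mu_0(\mathrm{d}x)\to 0$ as $s\to t$, by continuity of $t\mapsto X_t(x)$ and dominated convergence with dominating function $4\sup_{r\in[0,T]}|X_r(x)|^2\in L^1(\mu_0)$ a.s., so $\{\mu_t\}$ is a continuous $\mathcal{P}_2(\mathbb{R}^{d})$-valued process.

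Next I would verify the weak formulation \eqref{II-eq2.1}. Fix $\varphi\in C^2_c(\mathbb{R}^{d},\mathbb{R})$ and apply It\^o's formula to $\varphi(X_t(x))$; by Assumption~\ref{Assumption1} the quadratic covariation of the coordinates of $X_\cdot(x)$ is $\mathrm{d}\langle X^i(x),X^j(x)\rangle_s=A_{ij}(s,X_s(x),\bar{\mu}_s)\,\mathrm{d}s$, which gives
\begin{align*}
\varphi(X_t(x)) &= \varphi(x)+\int_0^t\nabla\varphi(X_s(x))\cdot V(s,X_s(x),\bar{\mu}_s)\,\mathrm{d}s+\tfrac12\int_0^t D^2\varphi(X_s(x)):A(s,X_s(x),\bar{\mu}_s)\,\mathrm{d}s\\
&\quad+\int_0^t\int_{\Theta}\nabla\varphi(X_s(x))\cdot G(s,X_s(x),\bar{\mu}_s,\theta)\,W(\mathrm{d}\theta,\mathrm{d}s).
\end{align*}
Integrating in $x$ against $\mu_0$ and using $\int_{\mathbb{R}^{d}}f(X_s(x))\,\mu_0(\mathrm{d}x)=\langle f,\bar{\mu}_s\rangle$ turns this into \eqref{II-eq2.1} with $\mu_t=\bar{\mu}_t$, provided one may exchange $\int\mu_0(\mathrm{d}x)$ with the time integral (ordinary Fubini) and with the stochastic integral (stochastic Fubini). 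Both exchanges are legitimate because $\nabla\varphi$ and $D^2\varphi$ are bounded with compact support $K$, the coefficients grow at most linearly (Remark~\ref{Remark1}), and \eqref{II-eq2.4} makes the relevant integrals finite; in particular
\[
\int_{\mathbb{R}^{d}}\Big(\mathbb{E}\int_0^T\big\|\,\mathbf{1}_{\{X_s(x)\in K\}}\,|G(s,X_s(x),\bar{\mu}_s,\cdot)|\,\big\|_{\vartheta}^2\,\mathrm{d}s\Big)^{1/2}\mu_0(\mathrm{d}x)<\infty,
\]
which is exactly the hypothesis of the stochastic Fubini theorem. This yields \eqref{II-eq2.1} for each fixed $t$, a.s.; since both sides are a.s.\ continuous in $t$, a.s.\ \eqref{II-eq2.1} holds simultaneously for all $t\in[0,T]$. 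Hence $\{\mu_t\}$ solves \eqref{I-eq1.2} and, being of the form $\mu_0\circ(X_t(\cdot))^{-1}$, is a superposition solution in the sense of Definition~\ref{definition3}.

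For uniqueness, suppose $\{\mu^1_t\}$ and $\{\mu^2_t\}$ are superposition solutions started from $\mu_0$. By Definition~\ref{definition3} there exist solutions $X^1,X^2$ of \eqref{I-eq1.1} with initial mass $\mu_0$ such that $\mu^i_t=\mu_0\circ(X^i_t(\cdot))^{-1}$; the uniqueness part of Proposition~\ref{proposition1} forces $X^1\equiv X^2$, whence $\mu^1\equiv\mu^2$. I expect the only genuinely delicate point to be the rigorous justification of the stochastic Fubini exchange, together with the measurability bookkeeping needed to upgrade ``for each $t$, \eqref{II-eq2.1} holds a.s.'' to ``a.s., \eqref{II-eq2.1} holds for all $t\in[0,T]$''; everything else is a direct consequence of Proposition~\ref{proposition1} and It\^o's formula.
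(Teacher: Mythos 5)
The paper does not prove this proposition; it is recalled from the reference \cite{GGK}. Your argument --- push $\mu_0$ forward by the unique flow $X$ of Proposition \ref{proposition1}, apply It\^o's formula to $\varphi(X_t(x))$, use Assumption \ref{Assumption1} to identify $\mathrm{d}\langle X^i(x),X^j(x)\rangle_s$ with $A_{ij}(s,X_s(x),\bar\mu_s)\,\mathrm{d}s$, integrate against $\mu_0$ via the ordinary and stochastic Fubini theorems, and read off uniqueness from Definition \ref{definition3} together with the uniqueness in Proposition \ref{proposition1} --- is precisely the standard superposition argument and matches what \cite{GGK} does. It is correct, and you rightly flag the stochastic Fubini exchange and the upgrade to a single null set for all $t$ as the only delicate technical points, with Assumption \ref{Assumption1} entering exactly where it should.
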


From now on, we will only consider the version $X_t(x)$, $t \in [0, T]$, $x \in \mathbb{R}^{d}$, of a solution to the SDE with interaction \eqref{I-eq1.1} which is continuous in $(t, x)$. Before the end of this section, we present a result on the space of the trajectory  of the solution $X_t(x)$.

\begin{prp}\label{proposition7}
Let Assumption \ref{Assumption2} hold and $X_t(x)$, $t \in [0, T]$, $x \in \mathbb{R}^{d}$ be the unique solution to the SDE with interaction \eqref{I-eq1.1} with initial mass distribution $\mu_{0} \in \mathcal{P}_m(\mathbb{R}^{d})$, $ m > \max \{ \frac{1}{\delta}, 2d \}$ .
Then $X(\omega) \in C([0,T], C_{\delta}(\mathbb{R}^{d}))$ for $\mathbb{P}$-almost all $\omega \in \Omega$.
\end{prp}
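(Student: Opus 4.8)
The plan is to reduce the assertion to a single pathwise growth estimate and then read off both the spatial and temporal regularity from it. Throughout I work with the version of $X$ that is continuous in $(t,x)$ provided by Proposition \ref{proposition1}. The key estimate I would prove is that
\[
\Gamma(\omega):=\sup_{t\in[0,T]}\ \sup_{x\in\mathbb{R}^{d}}\frac{|X_t(x)(\omega)|}{1+|x|^{1+\delta/2}}<\infty\qquad\text{for }\mathbb{P}\text{-a.a.\ }\omega .
\]
Granting this, the two requirements for $X(\omega)\in C([0,T],C_\delta(\mathbb{R}^{d}))$ follow quickly. First, for each fixed $t$ the map $x\mapsto X_t(x)$ is continuous and satisfies $|X_t(x)|/(1+|x|^{1+\delta})\le\Gamma$ on $\{|x|\ge 1\}$ and is bounded on $\{|x|\le 1\}$ by continuity, so $X_t(\cdot)\in C_\delta(\mathbb{R}^{d})$. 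Second, to see that $t\mapsto X_t(\cdot)$ is continuous into $(C_\delta(\mathbb{R}^{d}),\|\cdot\|_{C_\delta})$, fix $\varepsilon>0$: since $(1+|x|^{1+\delta/2})/(1+|x|^{1+\delta})\to 0$ as $|x|\to\infty$, I can pick $R=R(\omega,\varepsilon)$ with $2\Gamma(1+|x|^{1+\delta/2})/(1+|x|^{1+\delta})<\varepsilon$ for $|x|\ge R$, which bounds $\sup_{|x|\ge R}|X_t(x)-X_s(x)|/(1+|x|^{1+\delta})$ by $\varepsilon$ uniformly in $s,t$; on the compact set $[0,T]\times\{|x|\le R\}$ the joint continuity of $X$ gives $\sup_{|x|\le R}|X_t(x)-X_s(x)|<\varepsilon$ once $|t-s|$ is small. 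Combining, $\|X_t-X_s\|_{C_\delta}<\varepsilon$ for $|t-s|$ small, which is the desired (uniform) continuity.

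To prove the bound on $\Gamma$ I would run a Kolmogorov/chaining argument over dyadic cubes. Put $\xi(x):=\sup_{t\in[0,T]}|X_t(x)|$, which is continuous on $\mathbb{R}^{d}$ by the continuous version, so the suprema below are measurable. From \eqref{II-eq2.4}, and since $\mu_{0}\in\mathcal{P}_m$, one has $\mathbb{E}[\xi(x)^{p}]\le C_{p}(1+|x|^{p})$ for every $p\in[2,m]$, and from \eqref{II-eq2.5}, $\mathbb{E}[|\xi(x)-\xi(y)|^{p}]\le\mathbb{E}[\sup_{t\in[0,T]}|X_t(x)-X_t(y)|^{p}]\le C_{p}|x-y|^{p}$ for every $p\ge 2$. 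Fix $p\in(d,m]$ (possible since $m>2d>d$) and a H\"older exponent $\gamma\in(0,1-d/p)$. On the cube $Q_k:=[-2^{k},2^{k}]^{d}$, the Kolmogorov continuity theorem — rescaling to the unit cube so as to track the dependence of the constant on the side length $2^{k}$ — gives, for the H\"older seminorm $[\xi]_{\gamma,Q_k}:=\sup_{x\neq y\in Q_k}|\xi(x)-\xi(y)|/|x-y|^{\gamma}$,
\[
\mathbb{E}\bigl[[\xi]_{\gamma,Q_k}^{p}\bigr]\le C\,2^{k(p-d-\gamma p)} .
\]
Hence, with $a_k:=\sup_{x\in Q_k}\xi(x)\le\xi(0)+(2^{k}\sqrt d)^{\gamma}[\xi]_{\gamma,Q_k}$, and using $\mathbb{E}[\xi(0)^{p}]<\infty$, I get $\mathbb{E}[a_k^{p}]\le C(1+2^{k\gamma p}2^{k(p-d-\gamma p)})\le C\,2^{k(p-d)}$ for $k\ge 1$.

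The last step is Borel--Cantelli. By Chebyshev, $\mathbb{P}(a_k>2^{k(1+\delta/2)})\le 2^{-k(1+\delta/2)p}\mathbb{E}[a_k^{p}]\le C\,2^{-k(d+\delta p/2)}$, which is summable in $k$, so $\mathbb{P}$-a.s.\ there is a random $K_0$ with $a_k\le 2^{k(1+\delta/2)}$ for all $k\ge K_0$. For $x$ with $2^{k-1}<|x|\le 2^{k}$ and $k>K_0$ this gives $\xi(x)/(1+|x|^{1+\delta/2})\le 2^{k(1+\delta/2)}/2^{(k-1)(1+\delta/2)}=2^{1+\delta/2}$, while for $|x|\le 2^{K_0}$ it is at most $\sup_{Q_{K_0}}\xi<\infty$ by continuity on a compact set; so $\Gamma(\omega)<\infty$ a.s., which finishes the proof. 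The step I expect to be the only delicate one is the bookkeeping in the Kolmogorov estimate: I need to keep track of how the continuity-theorem constant scales with the box size $2^{k}$, so that the series $\sum_k\mathbb{P}(a_k>2^{k(1+\delta/2)})$ converges. The hypothesis $m>\max\{1/\delta,2d\}$ is comfortably more than what this argument requires — in fact $m>d$ already suffices — the extra room being handy if one additionally wants moment bounds for $\|X\|_{\infty,T}$.
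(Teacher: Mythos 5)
Your proof is correct in outline and follows the same overall architecture as the paper's: a moment/H\"older estimate over a sequence of spatial regions growing to cover $\mathbb{R}^{d}$, then Chebyshev and Borel--Cantelli to get a pathwise weighted-sup bound, then deduce membership in $C([0,T],C_{\delta}(\mathbb{R}^{d}))$ from joint continuity plus decay at infinity. The genuine differences are in the bookkeeping, and they are interesting. The paper works on unit-width annuli $\{|x|\in[k,k+1)\}$ with the Garsia--Rodemich--Rumsey lemma (applied in Lemma~\ref{lemma5}); this forces both $m>2d$ (for the GRR integral to converge) and $m>1/\delta$ (for $\sum_k k^{-\delta m}$ to converge). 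You instead use dyadic cubes $Q_k$ with a rescaled Kolmogorov continuity estimate, so the Borel--Cantelli series is automatically geometric, and you only need $p\in(d,m]$; as you correctly observe, your version of the argument would go through under the weaker hypothesis $m>d$. You also aim at the strictly stronger pathwise bound $\Gamma<\infty$ with exponent $1+\delta/2$, which makes the reduction to continuity in $C([0,T],C_{\delta})$ a bit more transparent than the paper's ``$\lim_{|x|\to\infty}\sup_t|X_t(x)|/(1+|x|^{1+\delta})=0$'' formulation, though both reductions are equally valid given the jointly continuous version of $X$.

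One small slip to flag: the rescaled Kolmogorov bound should read $\mathbb{E}\bigl[[\xi]_{\gamma,Q_k}^{p}\bigr]\le C\,2^{k(p-\gamma p)}$, not $C\,2^{k(p-d-\gamma p)}$. (Rescale to the unit cube: $\tilde\xi(y):=\xi(2^{k+1}y)$ has increments $\mathbb{E}[|\tilde\xi(y)-\tilde\xi(y')|^{p}]\le C\,2^{kp}|y-y'|^{p}$, Kolmogorov gives $\mathbb{E}[[\tilde\xi]_{\gamma}^{p}]\le C\,2^{kp}$, and dividing by the rescaling factor $2^{k\gamma p}$ yields $C\,2^{k(1-\gamma)p}$; there is no extra $-d$. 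This is consistent with the paper's own computation, which gives $\mathbb{E}[[\xi]_{1-2d/m,\,\mathcal{R}_n}^{m}]\le Cn^{2d}=Cn^{(1-\gamma)m}$.) Consequently $\mathbb{E}[a_k^{p}]\le C\,2^{kp}$ rather than $C\,2^{k(p-d)}$, and the Chebyshev bound becomes $\mathbb{P}(a_k>2^{k(1+\delta/2)})\le C\,2^{-k\delta p/2}$. This is still summable, so the conclusion and the rest of the proof are unaffected; only the intermediate exponent needs correcting.
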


We first give a moment estimate, which plays an important role in the proof of Proposition \ref{proposition7}.

\begin{lem}\label{lemma5}
Let Assumption \ref{Assumption2} hold and $X_t(x)$, $t \in [0, T]$, $x \in \mathbb{R}^{d}$ be the unique solution to the SDE with interaction \eqref{I-eq1.1} with initial distribution $\mu_{0} \in \mathcal{P}_m(\mathbb{R}^{d})$, $ m > 2d $.
Then there exists a constant $C_{L, m, d, T}$ such that for every $k \in \mathbb{N}$
\begin{equation}\label{II-eq2.2}
\mathbb{E}[ \sup_{|x| \in [k, k+1)} \sup_{t \in [0,T]} |X_t(x)|^m ] \leq C_{L, m, d, T}(1 + k )^m.
\end{equation}
\end{lem}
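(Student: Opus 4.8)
The plan is to reduce the supremum over the shell $\{|x|\in[k,k+1)\}$ to a finite union of balls and estimate each ball by a Kolmogorov-type continuity argument, then sum up. First I would fix $k\in\NN$ and cover the compact set $\overline{B(0,k+1)}\setminus B(0,k)$ by finitely many balls $B(x_j, 1)$, $j\in[N_k]$, with centres $|x_j|\le k+1$; crucially, by a volume/packing argument the number $N_k$ of such balls needed grows only polynomially in $k$, say $N_k\le C_d(1+k)^{d-1}$ (or at worst $C_d(1+k)^d$). On each ball, the idea is to write, for $|x|,|y|$ in the shell,
\[
\sup_{t\in[0,T]}|X_t(x)| \le \sup_{t\in[0,T]}|X_t(x_j)| + \sup_{t\in[0,T]}|X_t(x)-X_t(x_j)|,
\]
and control the second term uniformly in $x\in B(x_j,1)$ via a Kolmogorov continuity estimate in the spatial variable.

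The key technical step is to upgrade the pointwise bound \eqref{II-eq2.5}, namely $\ee[\sup_t|X_t(x)-X_t(y)|^p]\le C|x-y|^p$, to a bound on $\ee[\sup_{|x-y|\le 1,\,x,y\in B(x_j,2)}\sup_t|X_t(x)-X_t(y)|^m]$. Here I would invoke the Kolmogorov–Chentsov theorem (or the Garsia–Rodemich–Rumsey inequality) on the $d$-dimensional parameter space: since \eqref{II-eq2.5} holds for every $p\ge 2$, choosing $p>2d$ large enough gives Hölder continuity of $x\mapsto \sup_t|X_t(x)|$ with a modulus whose $m$-th moment is finite and, more importantly, \emph{independent of the location $x_j$} — the constant in \eqref{II-eq2.5} depends only on $L,p,d,T$, not on $x,y$. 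Hence $\ee[\osc_{B(x_j,1)}\sup_t|X_t|^m]\le C_{L,m,d,T}$, a constant uniform in $j$ and $k$. For the first term, Proposition \ref{proposition1} inequality \eqref{II-eq2.4} with $p=m$ gives $\ee[\sup_t|X_t(x_j)|^m]\le C_{L,m,d,T}(1+\int|x|^m\mu_0(\dd x)+|x_j|^m)\le C_{L,m,d,T}(1+k)^m$, using $\mu_0\in\PP_m(\Rd)$ and $|x_j|\le k+1$.

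Combining, $\ee[\sup_{|x|\in[k,k+1)}\sup_t|X_t(x)|^m]\le \ee[\max_{j\in[N_k]}(\cdots)]\le \sum_{j=1}^{N_k}\ee[(\cdots)]\le N_k\cdot C_{L,m,d,T}(1+k)^m$. This is where the hypothesis $m>2d$ is used decisively: if instead I sum the $N_k\asymp k^{d-1}$ (or $k^d$) terms naively I get an extra polynomial factor, which is acceptable for \emph{this} lemma only if absorbed — but actually for \eqref{II-eq2.2} as stated we need the bound to be exactly $O((1+k)^m)$, so I must be more careful: rather than bounding the max by the sum, I should use that $\ee[\max_j \xi_j]\le (\ee[\sum_j \xi_j^{q}])^{1/q}$ type inequalities, or simply note that the oscillation term contributes $O(N_k)$ while the centre term contributes $O((1+k)^m)$ per ball, so taking the max (not the sum) over centres of the centre terms gives $O((1+k)^m)$, and the oscillation max over $N_k$ balls, each with uniformly bounded $m$-th moment, is handled by $\ee[\max_j \zeta_j^m]\le \sum_j \ee[\zeta_j^m]\le C N_k \le C(1+k)^{d}\le C(1+k)^m$ since $m>2d>d$. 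The main obstacle is precisely this bookkeeping — ensuring the polynomial degree coming from the covering number $N_k$ stays below $m$, which is exactly guaranteed by $m>2d$, and getting a Kolmogorov-type bound on the spatial oscillation with a constant that does not blow up with $|x_j|$ (this follows because \eqref{II-eq2.5} is translation-uniform, but it must be stated cleanly, possibly by first proving a local Hölder estimate on balls of radius $2$ with a location-independent constant).
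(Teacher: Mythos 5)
Your local-ball covering plus GRR idea works for the \emph{oscillation} terms, but there is a genuine gap in the treatment of the \emph{centre} terms $\mathbb{E}\bigl[\max_{j\in[N_k]}\sup_{t}|X_t(x_j)|^m\bigr]$. You write that ``taking the max (not the sum) over centres of the centre terms gives $O((1+k)^m)$,'' but the max is \emph{inside} the expectation: what you have from \eqref{II-eq2.4} is $\max_j\mathbb{E}[\sup_t|X_t(x_j)|^m]\le C(1+k)^m$, and $\mathbb{E}[\max_j\cdot\,]\ge\max_j\mathbb{E}[\cdot\,]$, so this does not control $\mathbb{E}[\max_j\cdot\,]$. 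The only probabilistic bound you actually have is the union bound $\mathbb{E}[\max_j\xi_j]\le\sum_j\mathbb{E}[\xi_j]\le N_k\,C(1+k)^m\sim C(1+k)^{m+d-1}$, which overshoots the target $C(1+k)^m$ by a polynomial factor; and the $L^q$-trick you mention, $\mathbb{E}[\max_j\xi_j]\le\bigl(\sum_j\mathbb{E}[\xi_j^q]\bigr)^{1/q}\le\bigl(N_k C_{mq}\bigr)^{1/q}(1+k)^m$, still leaves a factor $N_k^{1/q}\sim k^{d/q}$ that tends to infinity with $k$ for any fixed $q$ (and letting $q\to\infty$ costs you control of the $p$-dependent constant in \eqref{II-eq2.4}). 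So the bookkeeping does not close as written, and $m>2d$ alone does not rescue it — the exponent you get is $m+d-1$, not $m$.

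The paper's proof sidesteps the union over $\sim k^{d-1}$ centres entirely: it applies the Garsia--Rodemich--Rumsey inequality \emph{once} on a single cube $\mathcal{R}_n$ of side $n=2(k+1)$ containing the whole shell, producing a single random ``GRR constant'' $B^n$ with $\mathbb{E}[B^n]\le C n^m$ by Fubini and \eqref{II-eq2.5}. The resulting estimate
\[
\sup_{t\in[0,T]}|X^i_t(x')-X^i_t(y')|\le \frac{8(1/n)^{1-2d/m}}{1-2d/m}\,(B^n)^{1/m}\,|x'-y'|^{1-2d/m}
\]
controls the oscillation over the \emph{entire} cube against one fixed reference point $x_0$ with $|x_0|=k$; since $|x'-y'|\lesssim n$, the two powers $(1/n)^{1-2d/m}$ and $|x'-y'|^{1-2d/m}$ cancel up to a dimensional constant, leaving $\mathbb{E}[\sup_{\text{shell}}\sup_t|X_t(x)-X_t(x_0)|^m]\le C\,\mathbb{E}[B^n]\le C n^m\le C(1+k)^m$, and \eqref{II-eq2.4} then handles the single centre term $\mathbb{E}[\sup_t|X_t(x_0)|^m]$. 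There is thus exactly one ``centre'' and no covering-number cost. If you want to keep your covering framework, you would have to replace the union bound over centres by a chaining/GRR estimate relating $X_t(x_j)$ to $X_t(x_0)$ across the whole cube — which in effect reproduces the paper's one-shot GRR argument — or prove a sub-Gaussian/exponential tail for $\sup_t|X_t(x)|$ so that $\log N_k\sim\log k$ rather than $N_k\sim k^{d-1}$ enters the maximal estimate.
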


\begin{proof}
Let  $\mathcal{R}_n$ denotes a cube in $\mathbb{R}^{d}$ whose length of the edge is $n$. For each $i \in [d]$, $t \in [0,T]$ and $n \in \mathbb{N}$, we have
\begin{eqnarray}\label{II-eq2.3}
& & \int_{\mathcal{R}_1}\int_{\mathcal{R}_1}
          \Big| \frac{ X^{ i}_{t}(nx) - X^{ i}_{t}(ny)}{|x - y| / \sqrt{d}} \Big|^m \mathrm{d}x\mathrm{d}y  \nonumber\\
&=& \int_{\mathcal{R}_n}\int_{\mathcal{R}_n}
          \Big| \frac{ X^{i}_{t}(x^{\prime}) - X^{ i}_{t}(y^{\prime})}{ |x^{\prime} - y^{\prime}| / n\sqrt{d} } \Big|^m
          \cdot n^{-2d}\mathrm{d}x^{\prime}\mathrm{d}y^{\prime}   \nonumber\\
&=& (\sqrt{d})^m n^{m - 2d} \int_{\mathcal{R}_n}\int_{\mathcal{R}_n}
          \frac{|X^{ i}_{t}( x^{\prime}) - X^{ i}_{t}(y^{\prime}) |^m}{|x^{\prime}- y^{\prime} |^m}\mathrm{d}x^{\prime}\mathrm{d}y^{\prime}  \nonumber\\
&= : & B^{ i, n}_t. \nonumber
\end{eqnarray}
Let
\begin{equation*}\label{II-eq2.6}
B^{ n}:= (\sqrt{d})^m n^{m - 2d} \int_{\mathcal{R}_n}\int_{\mathcal{R}_n}
                  \frac{\sup_{t \in [0,T]}|X_{t}( x^{\prime}) - X_{t}(y^{\prime}) |^m}
                       {|x^{\prime}- y^{\prime} |^m}\mathrm{d}x^{\prime}\mathrm{d}y^{\prime}.
\end{equation*}
By Fubini's theorem and \eqref{II-eq2.5}, we have
\begin{eqnarray}\label{II-eq2.7}
\mathbb{E}[ B^{ i, n}_t ]
&\leq & \mathbb{E}[ B^{ n} ]\nonumber\\
&\leq & (\sqrt{d})^m n^{m-2d}C_{L, m, d, T}\int_{\mathcal{R}_n} \int_{\mathcal{R}_n}\mathrm{d}x^{\prime}\mathrm{d}y^{\prime}\\
&\leq & (\sqrt{d})^m n^{m-2d}C_{L, m, d, T} n^{2d}\nonumber\\
&\leq & C_{L, m, d, T} n^m.\nonumber
\end{eqnarray}
Applying the Garsia-Rodemich-Rumsey lemma (see Theorem 1.1 in \cite{WalshJohn}), we obtain that for each $i \in [d]$, $t \in [0,T]$ and $n \in \mathbb{N}$, there exists a set $\Omega_{ i, t, n} \in \mathcal{F}$ such that $\mathbb{P}(\Omega_{ i, t, n})=0 $ and
for all $\omega \in \Omega \backslash \Omega_{ i, t, n}$, $x$, $y \in \mathcal{R}_1$
\begin{eqnarray*}\label{II-eq2.8}
|X^{ i}_t(nx) - X^{ i}_t(ny)|
&\leq & 8\int_0^{|x-y|} \Big( \frac{B^{ i, n}_{t}}{u^{2d}} \Big)^{\frac{1}{m}}\mathrm{d}u\nonumber\\
&  =  & \frac{8}{1- 2d/m} (B^{ i, n}_{t})^{\frac{1}{m}} |x-y|^{1 - \frac{2d}{m}} \nonumber\\
&\leq & \frac{8}{1- 2d/m} (B^{n})^{\frac{1}{m}} |x-y|^{1 - \frac{2d}{m}},
\end{eqnarray*}
which implies that with probability one, for all $x^{\prime}$, $y^{\prime} \in \mathcal{R}_n$
\begin{equation*}\label{II-eq2.9}
|X^{ i}_t(x^{\prime}) - X^{ i}_t(y^{\prime}) |
\leq \frac{8(1/n)^{1 - \frac{2d}{m}}}{1- 2d/m} (B^{n})^{\frac{1}{m}} |x^{\prime}-y^{\prime}|^{1 - \frac{2d}{m}}.
\end{equation*}
Since the mapping: $(x, t) \mapsto X_t(x)$ is continuous, it follows that there exists a set $\Omega_{ i, n} \in \mathcal{F}$ such that
$\mathbb{P}(\Omega_{ i, n})=0 $ and
for all $\omega \in \Omega \backslash \Omega_{ i, n}$, $x^{\prime}$, $y^{\prime} \in \mathcal{R}_n$
\begin{equation}\label{II-eq2.10}
\sup_{t \in [0,T]}|X^{ i}_t(x^{\prime}) - X^{ i}_t(y^{\prime}) |
\leq \frac{8(1/n)^{1 - \frac{2d}{m}}}{1- 2d/m} (B^{n})^{\frac{1}{m}} |x^{\prime}-y^{\prime}|^{1 - \frac{2d}{m}}.
\end{equation}

Now, choose $x_0 \in \mathbb{R}^{d}$ with $|x_0|=k$ and let $n = 2(k+1)$. By \eqref{II-eq2.4}, \eqref{II-eq2.7} and \eqref{II-eq2.10}, we obtain that
\begin{eqnarray*}\label{II-eq2.11}
&  &   \mathbb{E}[ \sup_{|x| \in [k, k+1 )} \sup_{t \in [0,T]} |X_t(x)|^m ]\nonumber\\
&\leq & C_m(\mathbb{E}[\sup_{|x| \in [k, k+1) } \sup_{t \in [0,T]}|X_t(x) - X_t(x_0)|^m]
         + \mathbb{E}[\sup_{t \in [0,T]}|X_t(x_0)|^m])\nonumber\\
&\leq & C_{m, d}(\mathbb{E}[B^{ 2(k+1)}] + \mathbb{E}[\sup_{t \in [0,T]}|X_t(x_0)|^m] )\nonumber\\
&\leq & C_{L, m, d, T}(1 + k )^m.
\end{eqnarray*}

\end{proof}

Now we come back to the proof of Proposition \ref{proposition7}.

\begin{proof}
Since the mapping: $(x, t) \mapsto X_t(x)$ is continuous, it is sufficient to prove that with probability one,
\begin{equation}\label{II-eq2.12}
\lim_{|x| \rightarrow \infty}\sup_{t \in [0, T]} \frac{|X_t(x)|}{1+ |x|^{1+ \delta}} = 0.
\end{equation}
It is obvious that \eqref{II-eq2.12} is equivalent to
\begin{equation}\label{II-eq2.13}
\lim_{n \rightarrow \infty}\sup_{|x| \geq n}\sup_{t \in [0, T]}\frac{|X_t(x)|}{1+ |x|^{1+ \delta}} =0.
\end{equation}
For each $k \in \mathbb{N}$, let
\begin{equation}\label{II-eq2.14}
\eta_{k}:= \sup_{|x| \in [k, k+1)} \sup_{t \in [0, T]}\frac{|X_t(x)|}{1+ |x|^{1+ \delta}}.
\end{equation}
Then
\begin{equation*}\label{II-eq2.15}
\sup_{|x| \geq n}\sup_{t \in [0, T]}\frac{|X_t(x)|}{1+ |x|^{1+ \delta}} = \sup_{k \geq n}\eta_{k},
\end{equation*}
and \eqref{II-eq2.13} is equivalent to $\lim_{k \rightarrow \infty}\eta_{k}=0$.

Now, we verify that with probability one, $\lim_{k \rightarrow \infty}\eta_{k}=0$, completing the proof.
For every $\gamma > 0$ and $k \in \mathbb{N}$, let
\begin{equation*}\label{II-eq2.16}
A_k := \{ \sup_{ |x| \in [k, k+1) } \sup_{t \in [0, T]} |X_t(x)|^m > \gamma ( 1 + k^{ 1+ \delta} )^{m} \}.
\end{equation*}
By Chebyshev's inequality and Lemma \ref{lemma5}, we have
\begin{eqnarray*}\label{II-eq2.17}
\sum_{k=1}^{\infty} \mathbb{P}(A_k)
&\leq & \sum_{k=1}^{\infty} \frac{ \mathbb{E}[ \sup_{ |x| \in [k, k+1) } \sup_{t \in [0, T]} |X_t(x)|^m ] }{\gamma ( 1+ k^{1 + \delta} )^m }   \nonumber\\
&\leq & \sum_{k=1}^{\infty} \frac{ C_{L, m, d, T}( 1 + k )^m }{\gamma( 1+ k^{1 + \delta} )^m}  \nonumber\\
&\leq & C_{L, m, d, T, \gamma}\sum_{k=1}^{\infty}\frac{1}{k^{\delta m}} < \infty, \nonumber
\end{eqnarray*}
here we have used $m > \max \{ \frac{1}{\delta}, 2d \}$ in the last step.\\
Using the Borel-Cantelli lemma, we get that with probability one,
\begin{equation*}\label{II-eq2.18}
     \lim_{k \rightarrow \infty} \eta_{k}^{m}
\leq \lim_{k \rightarrow \infty}\frac{\sup_{|x| \in [k, k+1)}\sup_{t \in [0, T]}|X_t(x)|^m}{(1+ k^{1+\delta})^m} = 0.
\end{equation*}

The proof is complete.
\end{proof}

\section{Statement of main results}\label{sec:mainresults}

In this section, we assume that the initial mass distribution $\mu_0 \in \mathcal{P}_m(\mathbb{R}^{d})$ with $m > \max \{ \frac{1}{\delta}, 2d \}$. For any $\epsilon > 0$, consider the following equation:
\begin{align}
\left\{
\begin{aligned}\label{III-eq3.1}
\mathrm{d}X^{\epsilon}_t(x)&=V(t,X^{\epsilon}_t(x),\bar{\mu}^{\epsilon}_t)\mathrm{d}t
                                +\sqrt{\epsilon}\int_{\Theta}G(t,X^{\epsilon}_t(x),\bar{\mu}^{\epsilon}_t,\theta)W(\mathrm{d}\theta, \mathrm{d}t),\\
X^{\epsilon}_{0}(x)&=x,\quad \bar{\mu}^{\epsilon}_t=\mu_{0}\circ(X^{\epsilon}_t(\cdot))^{-1},\quad x \in \mathbb{R}^{d},\quad t\in[0,T].
\end{aligned}
\right.
\end{align}

By Proposition \ref{proposition1}, the equation \eqref{III-eq3.1} admits a unique solution
$X^{\epsilon}_t(x)$, $t \in [0, T]$, $x \in \mathbb{R}^{d}$.
Moreover, by Proposition \ref{proposition7}, for $\mathbb{P}$-almost all $\omega \in \Omega$,
$X^{\epsilon}(\omega) \in C([0,T], C_{\delta}(\mathbb{R}^{d}))$.
\vskip 0.5cm
For any $h\in L^{2}([0,T], L^2(\Theta, \vartheta))$, consider the so-called skeleton equation:
\begin{align}
\left\{
\begin{aligned}\label{III-eq3.2}
X^{h}_t(x)&= x+ \int_0^t V(s,X^{h}_s(x), \mu^{h}_s)\mathrm{d}s
                                +\int_0^t \int_{\Theta}G(s, X^{h}_s(x), \mu^{h}_s,\theta)h(s, \theta)\vartheta(\mathrm{d}\theta)\mathrm{d}s,\\
X^{h}_{0}(x)&=x,\quad \mu^{h}_s=\mu_{0}\circ(X^{h}_s(\cdot))^{-1},\quad x \in \mathbb{R}^{d},\quad s \in [0,T].
\end{aligned}
\right.
\end{align}

We have the following result:
\begin{prp}\label{proposition2}
Under Assumption \ref{Assumption2}, the skeleton equation \eqref{III-eq3.2} has a unique solution $X^{h}_t(x)$, $t \in [0, T]$, $x \in \mathbb{R}^{d}$. Moreover, $X^{h} \in C([0,T], C_{\delta}(\mathbb{R}^{d}))$.
\end{prp}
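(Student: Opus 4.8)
The plan is to treat \eqref{III-eq3.2} as a deterministic flow with interaction whose time-dependent drift is $b^h(s,y):=V(s,y,\mu^h_s)+\langle G(s,y,\mu^h_s,\cdot),h(s,\cdot)\rangle_\vartheta$. By Cauchy--Schwarz, $|\langle G(s,y,\mu^h_s,\cdot),h(s,\cdot)\rangle_\vartheta|\le \||G(s,y,\mu^h_s,\cdot)|\|_\vartheta\,\|h(s,\cdot)\|_\vartheta$, so by Assumption~\ref{Assumption2} and Remark~\ref{Remark1} the map $y\mapsto b^h(s,y)$ is Lipschitz with constant $L\kappa(s)$ and of linear growth $L\kappa(s)(1+|y|+\mathcal{W}_2(\mu^h_s,\delta_0))$, where $\kappa(s):=1+\|h(s,\cdot)\|_\vartheta$ satisfies $\int_0^T\kappa(s)\,\mathrm{d}s\le T+\sqrt{T}\,\|h\|_{L^2([0,T],L^2(\Theta,\vartheta))}=:K_h<\infty$. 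Thus \eqref{III-eq3.2} is an ODE-with-interaction with $L^1$-in-time Lipschitz coefficients, and the proof parallels those of Proposition~\ref{proposition1} and Proposition~\ref{proposition7}, with elementary Gronwall and Cauchy--Schwarz estimates in the time variable replacing the martingale moment inequalities used there.

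\emph{Existence.} I would run a Picard iteration: $X^{h,(0)}_t(x)=x$, and $X^{h,(n+1)}$ obtained from \eqref{III-eq3.2} with $\mu^{h,(n)}_s:=\mu_0\circ(X^{h,(n)}_s(\cdot))^{-1}$ in place of $\mu^h_s$. By induction each $X^{h,(n)}$ is continuous in $(t,x)$; a Gronwall bound using Remark~\ref{Remark1} and Cauchy--Schwarz against $\kappa(s)\,\mathrm{d}s$ gives $\sup_{n}\sup_{s\le T}\int_{\mathbb{R}^d}|X^{h,(n)}_s(y)|^2\mu_0(\mathrm{d}y)<\infty$ (here $\mu_0\in\mathcal{P}_2(\mathbb{R}^d)$ is used), so each $\mu^{h,(n)}_s\in\mathcal{P}_2(\mathbb{R}^d)$. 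For the successive differences, write $E^x_n(t):=\sup_{s\le t}|X^{h,(n)}_s(x)-X^{h,(n-1)}_s(x)|^2$ and $D_n(t):=\int_{\mathbb{R}^d}E^y_n(t)\,\mu_0(\mathrm{d}y)$; using $\mathcal{W}_2(\mu^{h,(n)}_s,\mu^{h,(n-1)}_s)^2\le D_{n-1}(s)$, the Lipschitz bound and Cauchy--Schwarz against $\kappa(s)\,\mathrm{d}s$ yield $E^x_n(t)+D_n(t)\le C\int_0^t\big(E^x_{n-1}(s)+D_{n-1}(s)\big)\kappa(s)\,\mathrm{d}s$ for a constant $C=C(L,K_h)$. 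Iterating this Volterra inequality gives $E^x_n(T)+D_n(T)\le c\,(1+|x|)^2(CK_h)^{n-1}/(n-1)!$, so that, after dividing by $1+|x|^{1+\delta}$, the series $\sum_n\sup_{t\le T}|X^{h,(n+1)}_t(x)-X^{h,(n)}_t(x)|/(1+|x|^{1+\delta})$ converges uniformly in $x$. Hence $X^{h,(n)}$ converges in the Banach space $C([0,T],C_\delta(\mathbb{R}^d))$ to some $X^h$, and since $\sup_s\mathcal{W}_2(\mu^{h,(n)}_s,\mu^h_s)\to0$, passing to the limit in the iteration (dominated convergence and continuity of $V,G$ in $x$ and $\mu$) shows that $X^h$ solves \eqref{III-eq3.2} with $\mu^h_s=\mu_0\circ(X^h_s(\cdot))^{-1}$.

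\emph{A priori bounds and regularity.} First, a Gronwall estimate for $t\mapsto\int_{\mathbb{R}^d}\sup_{s\le t}|X^h_s(y)|^2\mu_0(\mathrm{d}y)$ (again via Remark~\ref{Remark1} and Cauchy--Schwarz in $\kappa(s)\,\mathrm{d}s$) shows $M:=\sup_{s\le T}\mathcal{W}_2(\mu^h_s,\delta_0)<\infty$. Then for fixed $x$, $|X^h_t(x)|\le|x|+L\int_0^t(1+|X^h_s(x)|+M)\kappa(s)\,\mathrm{d}s$, so Gronwall gives the linear growth $\sup_{t\le T}|X^h_t(x)|\le C_h(1+|x|)$; similarly $\sup_{t\le T}|X^h_t(x)-X^h_t(y)|\le e^{LK_h}|x-y|$, which together with the absolute continuity of $t\mapsto X^h_t(x)$ gives joint continuity of $(t,x)\mapsto X^h_t(x)$ and $\mathcal{W}_2$-continuity of $s\mapsto\mu^h_s$. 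The bound $\sup_{t\le T}|X^h_t(x)|\le C_h(1+|x|)$ immediately gives $\sup_x|X^h_t(x)|/(1+|x|^{1+\delta})<\infty$ for every $t$; and $|X^h_t(x)-X^h_{t'}(x)|\le L\big(1+C_h(1+|x|)+M\big)\int_{t'}^{t}\kappa(s)\,\mathrm{d}s$, so dividing by $1+|x|^{1+\delta}$ and using $\sup_x\frac{1+C_h(1+|x|)+M}{1+|x|^{1+\delta}}<\infty$ shows $\|X^h_t-X^h_{t'}\|_{C_\delta(\mathbb{R}^d)}\to0$ as $t'\to t$; hence $X^h\in C([0,T],C_\delta(\mathbb{R}^d))$.

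\emph{Uniqueness and main difficulty.} If $X^h$ and $\widetilde X^h$ are two solutions, set $b(t):=\int_{\mathbb{R}^d}\sup_{s\le t}|X^h_s(y)-\widetilde X^h_s(y)|^2\mu_0(\mathrm{d}y)$, which is finite by the growth bounds. Using $\mathcal{W}_2(\mu^h_s,\widetilde\mu^h_s)^2\le\int_{\mathbb{R}^d}|X^h_s(y)-\widetilde X^h_s(y)|^2\mu_0(\mathrm{d}y)$, Assumption~\ref{Assumption2} and Cauchy--Schwarz give $b(t)\le C\int_0^t b(s)\kappa(s)\,\mathrm{d}s$, so $b\equiv0$ by Gronwall, and then a pointwise-in-$x$ Gronwall argument gives $X^h\equiv\widetilde X^h$. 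The recurring obstacle is the McKean--Vlasov coupling through $\mu^h_s=\mu_0\circ(X^h_s(\cdot))^{-1}$: every estimate (the iteration, the a priori bound, the uniqueness inequality) must be run simultaneously for the $\mu_0$-averaged quantity, which closes on itself, and for the pointwise-in-$x$ quantity, which is then controlled by it; and one must check at each stage that the pushforward stays in $\mathcal{P}_2(\mathbb{R}^d)$, which is exactly where the moment assumption on $\mu_0$ enters.
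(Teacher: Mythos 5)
Your proof is correct and for well-posedness follows the paper's route: both absorb the skeleton integral into a modified drift $\tilde V(t,x,\mu)=V(t,x,\mu)+\int_\Theta G(t,x,\mu,\theta)h(t,\theta)\vartheta(\mathrm{d}\theta)$, observe that the resulting Lipschitz constant $L(1+\|h(t,\cdot)\|_\vartheta)$ is (square-)integrable in $t$, and then run a two-level (pointwise-in-$x$ and $\mu_0$-averaged) Gronwall scheme for the fixed point; the paper delegates the details to Theorem~2.9 of \cite{GGK}, whereas you spell them out. Where you genuinely depart is the $C_\delta$-membership: the paper points to ``similar arguments as in the proof of Proposition~\ref{proposition7}'', which for the stochastic flow proceeds via $L^m$ moments of spatial increments, the Garsia--Rodemich--Rumsey lemma and Borel--Cantelli, and uses $\mu_0\in\mathcal{P}_m(\mathbb{R}^d)$ with $m>\max\{1/\delta,2d\}$; you instead observe that the deterministic skeleton flow obeys the pathwise Lipschitz bound $\sup_{t\le T}|X^h_t(x)-X^h_t(y)|\le e^{LK_h}|x-y|$ (the Wasserstein term drops out since both trajectories share the same $\mu^h$) and the pathwise linear growth $\sup_{t\le T}|X^h_t(x)|\le C_h(1+|x|)$, which together give joint $(t,x)$-continuity, the decay $|X^h_t(x)|/(1+|x|^{1+\delta})\to 0$ as $|x|\to\infty$, and $C_\delta$-valued continuity in $t$ directly, in fact under the weaker assumption $\mu_0\in\mathcal{P}_2(\mathbb{R}^d)$ alone. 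This is a simpler and slightly sharper argument for that component; the only trade-off is that it is specific to the deterministic case and so cannot be recycled for Proposition~\ref{proposition7} itself.
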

\begin{proof}
Let $\tilde{V}(t, x, \mu):= V(t, x, \mu) + \int_{\Theta}G(t, x, \mu, \theta)h(t, \theta)\vartheta(\mathrm{d}\theta)$. For every $t \in [0,T]$, $x$, $y\in \mathbb{R}^{d}$ and $\mu$, $\nu \in \mathcal{P}_2(\mathbb{R}^{d})$, by Assumption \ref{Assumption2},
\begin{eqnarray*}\label{III-eq3.7}
&     &   |\tilde{V}(t, x, \mu) - \tilde{V}(t, y, \nu)| \nonumber\\
&\leq & |V(t, x, \mu) - V(t, y, \nu)| + | \int_{\Theta} (G(t, x, \mu, \theta) - G(t, y, \nu, \theta)) \cdot h(t, \theta)\vartheta(\mathrm{d}\theta)| \nonumber\\
&\leq & L(1 + \|h(t, \cdot) \|_{\vartheta})(|x -y| + \mathcal{W}_{2}(\mu, \nu)). \nonumber
\end{eqnarray*}
Then, using the similar arguments as in the proof of Theorem 2.9 in \cite{GGK} and  in the
proof of Proposition \ref{proposition7}, and using  the fact that
$L(1 + \|h(t, \cdot) \|_{\vartheta}) \in L^2([0, T], \mathbb{R}^{+})$, one can show that there exists a unique solution
$ X^{h} \in C( [0, T], C_{\delta}(\mathbb{R}^{d}) )$ to equation \eqref{III-eq3.2}. We omit the details.
\end{proof}

We now state the main result in this paper.
\begin{theorem}\label{theorem1}
Let Assumption \ref{Assumption2} hold and $X^{\epsilon}$ be the unique solution to equation \eqref{III-eq3.1}. Then the family of $\{X^{\epsilon}\}_{\epsilon > 0}$ satisfies a LDP on the space $C([0,T], C_{\delta}(\mathbb{R}^{d}))$ with rate function
\begin{equation*}\label{III-eq3.3}
 I(g):= \inf_{ \{ h \in L^2([0,T], L^2(\Theta, \vartheta)): g= Y^h  \} } \{ \frac{1}{2} \int_0^T \|h(t,\cdot)\|_{\vartheta}^2 \mathrm{d}t \},
 \ \forall \ g \in C([0,T], C_{\delta}(\mathbb{R}^{d})),
\end{equation*}
with the convention $\inf\{ \emptyset \}= \infty$, here $Y^h$ solves equation \eqref{III-eq3.2}.
\end{theorem}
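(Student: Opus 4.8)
The plan is to deduce the LDP from the weak convergence (Laplace principle) method, in the convenient form of the sufficient conditions of \cite{MSW} (a reformulation of the Budhiraja--Dupuis criterion \cite{BDA}). For $N<\infty$ put $S_N := \{ h \in L^2([0,T], L^2(\Theta,\vartheta)) : \int_0^T \|h(t,\cdot)\|_\vartheta^2\,\mathrm{d}t \le N \}$, endowed with the weak $L^2$-topology, under which $S_N$ is a compact metrizable space, and let $\mathcal{A}_N$ be the class of $\mathbb{F}$-predictable $S_N$-valued processes. By pathwise uniqueness (Proposition \ref{proposition1}) together with a Yamada--Watanabe argument there is a Borel map $\mathcal{G}^{\epsilon}$ from the path space of $W$ into $C([0,T], C_\delta(\mathbb{R}^{d}))$ with $X^{\epsilon} = \mathcal{G}^{\epsilon}(\sqrt{\epsilon}\,W)$ a.s.\ (Proposition \ref{proposition7} guarantees the range), and Proposition \ref{proposition2} gives $Y^h = \mathcal{G}^0(\int_0^\cdot h(s)\,\mathrm{d}s)$. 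Since the path space is Polish (the solutions in fact take values in a separable subspace of $C([0,T], C_\delta(\mathbb{R}^{d}))$, cf.\ \eqref{II-eq2.12}), the Laplace principle is equivalent to the LDP, and it suffices to check: \textbf{(C1)} for each $N$, $h \mapsto Y^h$ is continuous from $S_N$ (weak topology) to $C([0,T], C_\delta(\mathbb{R}^{d}))$ --- which also yields that $I$ is a good rate function; \textbf{(C2)} for each $N$, whenever $h^{\epsilon}\in\mathcal{A}_N$ with $h^{\epsilon}\to h$ in distribution as $S_N$-valued random elements, one has $\bar X^{\epsilon}\to Y^h$ in distribution in $C([0,T], C_\delta(\mathbb{R}^{d}))$, where $\bar X^{\epsilon}=\mathcal{G}^{\epsilon}(\sqrt{\epsilon}\,W+\int_0^\cdot h^{\epsilon}(s)\,\mathrm{d}s)$ solves the controlled equation obtained from \eqref{III-eq3.1} by Girsanov's theorem, i.e.\ by adding the drift $\int_\Theta G(t,\bar X^{\epsilon}_t(x),\bar\mu^{\epsilon}_t,\theta)h^{\epsilon}(t,\theta)\,\vartheta(\mathrm{d}\theta)\,\mathrm{d}t$ to \eqref{III-eq3.1}.

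For (C1): given $h_n\rightharpoonup h$ in $S_N$, Assumption \ref{Assumption2}, the Cauchy--Schwarz inequality and Gronwall's lemma applied to \eqref{III-eq3.2} (with the $L^1$-in-time weight $1+\|h_n(s,\cdot)\|_\vartheta^2$, whose integral is at most $T+N$) give bounds uniform over $S_N$: $\sup_n\sup_t|Y^{h_n}_t(x)|\le C_N(1+|x|)$, $\sup_n\sup_t|Y^{h_n}_t(x)-Y^{h_n}_t(y)|\le C_N|x-y|$, and $\sup_n\sup_{|t-s|\le\tau}|Y^{h_n}_t(x)-Y^{h_n}_s(x)|\le C_N(1+|x|)\tau^{1/2}$ (the time-increment of the control term estimated by Cauchy--Schwarz). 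The linear growth bound makes the weighted tail $\sup_n\sup_t\sup_{|x|\ge R}|Y^{h_n}_t(x)|/(1+|x|^{1+\delta})$ vanish as $R\to\infty$; combined with equicontinuity on $[0,T]\times\overline{B(0,R)}$, Arzel\`a--Ascoli and a diagonal extraction, this gives precompactness of $\{Y^{h_n}\}$ in $C([0,T], C_\delta(\mathbb{R}^{d}))$. To identify a limit point $Z$, pass to the limit in \eqref{III-eq3.2}: the drift term converges by the Lipschitz property and dominated convergence; for the control term write $\langle G(s,Y^{h_n}_s(x),\mu^{h_n}_s,\cdot),h_n(s,\cdot)\rangle_\vartheta = \langle G(s,Y^{h_n}_s(x),\mu^{h_n}_s,\cdot)-G(s,Z_s(x),\mu^Z_s,\cdot),h_n(s,\cdot)\rangle_\vartheta + \langle G(s,Z_s(x),\mu^Z_s,\cdot),h_n(s,\cdot)\rangle_\vartheta$, where the first term integrates (by Cauchy--Schwarz, $\|h_n\|_{L^2}\le N^{1/2}$) to a quantity $\to0$ by the established uniform convergence on compacts --- made rigorous by folding it into a Gronwall estimate for $\sup_t|Y^{h_n}_t(x)-Z_t(x)|^2$, with $\mathcal{W}_2(\mu^{h_n}_s,\mu^Z_s)^2\le\int|Y^{h_n}_s-Z_s|^2\,\mathrm{d}\mu_0$ handled by a dominated-convergence argument over $\mu_0\in\mathcal{P}_m(\mathbb{R}^{d})$ --- and the second term $\to0$ because $s\mapsto G(s,Z_s(x),\mu^Z_s,\cdot)$ is a fixed element of $L^2([0,T],L^2(\Theta,\vartheta))$ and $h_n\rightharpoonup h$ (uniformly in $t$ by the H\"older-$\tfrac12$ time-equicontinuity). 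Hence $Z$ solves \eqref{III-eq3.2} and $Z=Y^h$ by uniqueness, so $Y^{h_n}\to Y^h$.

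For (C2): the controlled equation is well-posed pathwise by the fixed-point/estimate scheme of Proposition \ref{proposition1} augmented with the $\sqrt\epsilon$-martingale term, and --- replacing $L^2$-isometries by the Burkholder--Davis--Gundy inequality --- the moment estimates of Proposition \ref{proposition1}, Lemma \ref{lemma5} and the Garsia--Rodemich--Rumsey argument of Proposition \ref{proposition7} go through \emph{uniformly} in $\epsilon\in(0,1]$ and $h^{\epsilon}\in\mathcal{A}_N$, yielding $\sup_\epsilon\mathbb{E}[\sup_t|\bar X^{\epsilon}_t(x)|^p]\le C_{N,p}(1+|x|^p)$, a Kolmogorov-type $(t,x)$-increment estimate, and uniform smallness of the weighted tail; by the Arzel\`a--Ascoli-plus-tail scheme of (C1) this gives tightness of $\{\bar X^{\epsilon}\}$ --- and jointly of $(h^{\epsilon},\sqrt\epsilon\,W,\bar X^{\epsilon})$, with $W$ realized in $C([0,T],U_1)$ for a Hilbert space $U_1\supset L^2(\Theta,\vartheta)$ with Hilbert--Schmidt embedding --- in $C([0,T], C_\delta(\mathbb{R}^{d}))$. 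Along a subsequence, Skorokhod's representation theorem yields a.s.-convergent copies $(\tilde h^{\epsilon},\sqrt\epsilon\,\tilde W,\tilde X^{\epsilon})\to(\tilde h,0,\tilde X)$ on a new probability space; passing to the limit in the controlled integral equation for $\tilde X^{\epsilon}$ --- the $\sqrt\epsilon$-stochastic term vanishing since its bracket is $O(\epsilon)$, the drift term converging by Lipschitz continuity and dominated convergence, and the control term handled by the same add-and-subtract decomposition as in (C1) (difference term $\to0$ by Lipschitz continuity, a.s.\ convergence, $L^2$-boundedness of $\tilde h^{\epsilon}$ and uniform integrability; off-diagonal term $\to0$ by the weak $L^2$-convergence $\tilde h^{\epsilon}\rightharpoonup\tilde h$) --- shows $\tilde X$ solves \eqref{III-eq3.2} with control $\tilde h$, whence $\tilde X=Y^{\tilde h}$ by uniqueness. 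As $\tilde h\stackrel{d}{=}h$ and $h\mapsto Y^h$ is measurable, the subsequential limit law is that of $Y^h$ independently of the subsequence, so $\bar X^{\epsilon}\to Y^h$ in distribution. With (C1) and (C2), the theorem of \cite{MSW} yields the LDP on $C([0,T], C_\delta(\mathbb{R}^{d}))$ with good rate function $I$.

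I expect the main obstacle to be working in the non-standard state space $C([0,T], C_\delta(\mathbb{R}^{d}))$: in both (C1) and (C2) one must simultaneously establish $(t,x)$-regularity on every compact ball \emph{and} a control --- uniform in $n$, respectively in $\epsilon$ --- of the weighted tail $\sup_{|x|\ge R}\sup_t|X_t(x)|/(1+|x|^{1+\delta})$, which forces one to re-run the Lemma \ref{lemma5}/Proposition \ref{proposition7} moment machinery uniformly in the control and in $\epsilon$ (using BDG in place of the $L^2$-isometry). The second delicate point is the passage to the limit in the control term $\int_0^\cdot\langle G(s,X_s(x),\mu_s,\cdot),h(s,\cdot)\rangle_\vartheta\,\mathrm{d}s$, in which both the integrand (through the solution) and the integrator (only weakly convergent) vary; this is the characteristic difficulty of the weak convergence method, resolved by the add-and-subtract decomposition combined with the Gronwall and uniform-integrability estimates above.
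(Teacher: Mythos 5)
Your proposal is correct in outline and, for the continuity of the skeleton map (your (C1)), follows essentially the same route as the paper's verification of (LDP2): an a priori Gronwall bound that is uniform over $S_N$, the add--and--subtract decomposition of the control term, the use of weak $L^2$-convergence for the off-diagonal piece, and the annulus/tail estimate of Lemma \ref{lemma5}. Where you genuinely diverge from the paper is in the stochastic part. You state the original Budhiraja--Dupuis condition --- your (C2): for \emph{random} controls $h^\epsilon\in\mathcal{A}_N$ converging in distribution in $S_N$, the controlled process $\bar X^\epsilon$ must converge \emph{in distribution} to $Y^h$ --- and you prove it by establishing tightness of $\bar X^\epsilon$ in $C([0,T],C_\delta(\mathbb{R}^d))$ (uniform moment, GRR and tail estimates), invoking Skorokhod representation, and passing to the limit in the controlled equation. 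The paper instead uses the Matoussi--Sabbagh--Zhang reformulation (Theorem 3.2 in \cite{MSW}), whose first condition (LDP1) is a different and strictly easier statement: compare $Y^\epsilon=\Gamma^\epsilon(W+\tfrac{1}{\sqrt\epsilon}\int h^\epsilon)$ with $Z^\epsilon=\Gamma^0(\int h^\epsilon)$ driven by the \emph{same} random control $h^\epsilon$, and show $\|Y^\epsilon-Z^\epsilon\|_{\infty,T}\to0$ in \emph{probability}. Because the control appears identically on both sides, the control drift contributes only a Lipschitz-contraction term under Gronwall, the only ``small'' term is the $\sqrt\epsilon$-stochastic integral, and no tightness or Skorokhod argument is needed; this is exactly what makes the MSW criterion convenient here. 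So while you cite \cite{MSW}, your (C2) is actually the classical Budhiraja--Dupuis condition, and you pay for it with the tightness--Skorokhod--identification machinery (and the attendant care about separability of the state space, which you rightly flag). Both routes are valid and lead to the same rate function; the paper's is the shorter one and is precisely the reason it appeals to \cite{MSW} rather than \cite{BDA}.
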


\begin{proof}
Let $H$ be a Hilbert space such that the imbedding $L^2(\Theta, \vartheta)\subset H$ is Hilbert-Schmdit.  By Proposition \ref{proposition2}, there exists a measurable mapping
$\Gamma^0(\cdot): C([0,T], H) \rightarrow C([0, T], C_{\delta}(\mathbb{R}^{d}))$
such that $ Y^h= \Gamma^0(\int_0^{\cdot}h(s)\mathrm{d}s)$ for $h\in L^2([0,T], L^2(\Theta, \vartheta))$.

Let
\begin{equation*}\label{III-eq3.4}
\mathcal{H}^{N}:= \{h \in L^2([0,T], L^2(\Theta, \vartheta)): \int_0^T \|h(t, \cdot) \|^2_{\vartheta}\mathrm{d}t \leq N \},
\end{equation*}
and
\begin{equation*}\label{III-eq3.5}
\tilde{\mathcal{H}}^{N}:= \{h: h\ \mbox{is}\ L^2(\Theta, \vartheta)\mbox{-valued}\ \mathcal{F}_{t}\mbox{-predictable process such that}\ h(\omega)\in \mathcal{H}^{N},\ \mathbb{P}\text{-}a.s \}.
\end{equation*}
Throughout this paper, $\mathcal{H}^{N}$ is endowed with the weak topology on $L^2([0,T], L^2(\Theta, \vartheta))$ and it is a polish space.

Notice that the $L^2(\Theta, \vartheta)-$cylindrical Wiener process $W_t, t \geq 0$ is now a $H$-valued Wiener process.
By Proposition \ref{proposition1} and Proposition \ref{proposition7}, for every $\epsilon >0$, there exists a measurable mapping
$\Gamma^{\epsilon}: C([0,T], H) \rightarrow C([0,T], C_{\delta}(\mathbb{R}^{d}))$ such that
\[  \Gamma^{\epsilon}( W(\cdot) ) = X^{\epsilon},  \]
and applying the Girsanov theorem, for any $N > 0$ and $h^{\epsilon} \in \tilde{\mathcal{H}}^N$,
\[ Y^{\epsilon}:= \Gamma^{\epsilon}( W(\cdot) + \frac{1}{\sqrt{\epsilon}}\int_0^{\cdot} h^{\epsilon}(s)\mathrm{d}s )  \]
is the solution of the following SDE with interaction:
\begin{align}
\left\{
\begin{aligned}\label{III-eq3.6}
&Y^{\epsilon}_t(x)= x+ \int_0^t V(s, Y^{\epsilon}_s(x), \nu^{\epsilon}_s)\mathrm{d}s
                                +\int_0^t\int_{\Theta}G(s, Y^{\epsilon}_s(x), \nu^{\epsilon}_s,\theta)h^{\epsilon}(s,\theta)\vartheta(\mathrm{d}\theta)\mathrm{d}s\\
& \qquad\quad\quad \ +\sqrt{\epsilon}\int_0^t\int_{\Theta}G(s, Y^{\epsilon}_s(x), \nu_s^{\epsilon})W(\mathrm{d}\theta, \mathrm{d}s),\\
&Y^{\epsilon}_{0}(x)=x,\quad \nu^{\epsilon}_s=\mu_{0}\circ(Y^{\epsilon}_s(\cdot))^{-1},\quad x \in \mathbb{R}^{d},\quad s \in [0,T].
\end{aligned}
\right.
\end{align}

According to Theorem 3.2 in \cite{MSW}, Theorem \ref{theorem1} is established once we have proved:
\begin{itemize}
\item [({\bf LDP1})]\label{LDP1} For every $N < +\infty$, any family $\{h^{\epsilon}\}_{\epsilon >0} \subset \tilde{\mathcal{H}}^{N}$ and any $\gamma > 0$,
    \[ \lim_{\epsilon \rightarrow 0} \mathbb{P}( \| Y^{\epsilon} - Z^{\epsilon} \|_{\infty, T} > \gamma )=0,  \]
    where $ Y^{\epsilon}= \Gamma^{\epsilon}( W(\cdot)+ \frac{1}{\sqrt{\epsilon}}\int_0^{\cdot} h^{\epsilon}(s) \mathrm{d}s ) $
    and $ Z^{\epsilon}= \Gamma^{0}(\int_0^{\cdot} h^{\epsilon}(s)\mathrm{d}s ). $

\item [({\bf LDP2})]\label{LDP2}For every $N < +\infty$ and any family $\{ h^\epsilon \}_{\epsilon >0} \subset \mathcal{H}^{N}$ that converges weakly to some element $h$ in $\mathcal{H}^{N}$ as $\epsilon \rightarrow 0$,
    \[ \lim_{\epsilon \rightarrow 0} \| \Gamma^0(\int_0^{\cdot}h^{\epsilon}(s)\mathrm{d}s) - \Gamma^0(\int_0^{\cdot}h(s)\mathrm{d}s) \|_{\infty, T} = 0.  \]

\end{itemize}

(LDP1) will be verified in Proposition \ref{Proposition5} in Section \ref{LDPcondition2} and (LDP2) will be established
in Proposition \ref{Proposition4} in Section \ref{LDPcondition1}.

\end{proof}

For every $\epsilon > 0$, consider the conservative SPDE:
\begin{align}
\left\{
\begin{aligned}\label{III-eq3.8}
\mathrm{d}\mu^{\epsilon}_t&= \frac{\epsilon}{2}D^2:(A(t, \cdot, \mu^{\epsilon}_t)\mu^{\epsilon}_t)\mathrm{d}t - \nabla \cdot (V(t, \cdot, \mu^{\epsilon}_t)\mu^{\epsilon}_t)\mathrm{d}t \\
& \ \ \ \  - \sqrt{\epsilon}\int_{\Theta} \nabla \cdot (G(t, \cdot, \mu^{\epsilon}_t, \theta)\mu^{\epsilon}_t)W(\mathrm{d}\theta, \mathrm{d}t),\\
\mu^{\epsilon}_0&= \mu_{0}.
\end{aligned}
\right.
\end{align}

By Proposition \ref{proposition6}, there exists a unique superposition solution
$\mu^{\epsilon}_t = \mu_{0}\circ(X^{\epsilon}_t(\cdot))^{-1}$, $t \in [0, T]$, to equation \eqref{III-eq3.8}, where $X^{\epsilon}$ is the unique solution to the corresponding SDE with interaction \eqref{III-eq3.1}. Define the map $F: C([0, T], C_{\delta}(\mathbb{R}^{d})) \rightarrow C([0, T], \mathcal{P}_{2}(\mathbb{R}^{d}))$ as
\[ F(u)(t)= \mu_{0} \circ (u(t, \cdot))^{-1},\ \ u \in C([0,T], C_{\delta}(\mathbb{R}^{d})). \]
It is easy to see that the map is continuous. Then by Theorem \ref{theorem1} and the contraction principle of large deviaions (see Theorem 1.16 in \cite{BD}), we have the following result on the large deviations of conservative SPDEs.
\begin{theorem}\label{corollary1}
Let Assumptions \ref{Assumption1} and \ref{Assumption2} hold and $\mu^{\epsilon}$ be the unique superposition solution to equation \eqref{III-eq3.8}.
Then the family of $\{\mu^{\epsilon}\}_{\epsilon >0}$ satisfies a LDP on the space $C([0, T], \mathcal{P}_{2}(\mathbb{R}^{d}))$ with rate function
\begin{equation*}\label{III-eq3.9}
 J(\phi):= \inf_{ \{ h \in L^2([0,T], L^2(\Theta, \vartheta)): \phi= F(Y^h)  \} } \{ \frac{1}{2} \int_0^T \|h(t,\cdot)\|_{\vartheta}^2 \mathrm{d}t \}, \ \forall \ \phi \in C([0,T], \mathcal{P}_{2}(\mathbb{R}^{d})),
\end{equation*}
with the convention $\inf\{ \emptyset \}= \infty$, here $Y^h$ solves equation \eqref{III-eq3.2}.
\end{theorem}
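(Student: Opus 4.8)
The plan is to derive Theorem \ref{corollary1} as a direct consequence of Theorem \ref{theorem1} together with the contraction principle, using the continuity of the map $F$. First I would verify carefully that $F: C([0,T], C_{\delta}(\mathbb{R}^{d})) \to C([0,T], \mathcal{P}_2(\mathbb{R}^{d}))$ is well-defined and continuous. Well-definedness amounts to checking that for $u \in C([0,T], C_{\delta}(\mathbb{R}^{d}))$ the pushforward $\mu_0 \circ (u(t,\cdot))^{-1}$ indeed lies in $\mathcal{P}_2(\mathbb{R}^{d})$: since $\mu_0 \in \mathcal{P}_m(\mathbb{R}^{d})$ with $m > \max\{1/\delta, 2d\} \geq 2(1+\delta)$ and $|u(t,x)| \leq \|u\|_{\infty,T}(1+|x|^{1+\delta})$, the second moment $\int_{\mathbb{R}^{d}} |u(t,x)|^2 \mu_0(\mathrm{d}x)$ is finite; continuity in $t$ of $t \mapsto \mu_0 \circ (u(t,\cdot))^{-1}$ in $\mathcal{W}_2$ follows from the uniform (in $x$, on $\mu_0$-a.e. growth scale) continuity of $t \mapsto u(t,x)$ and dominated convergence. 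For continuity of $F$ itself, if $u_n \to u$ in $C([0,T], C_{\delta}(\mathbb{R}^{d}))$, then $\mathcal{W}_2^2(F(u_n)(t), F(u)(t)) \leq \int_{\mathbb{R}^{d}} |u_n(t,x) - u(t,x)|^2 \mu_0(\mathrm{d}x)$ by using the obvious coupling, and this is bounded by $\|u_n - u\|_{\infty,T}^2 \int_{\mathbb{R}^{d}} (1+|x|^{1+\delta})^2 \mu_0(\mathrm{d}x) \to 0$ uniformly in $t$, which also shows $\sup_t \mathcal{W}_2(F(u_n)(t), F(u)(t)) \to 0$.

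Next I would invoke Theorem \ref{theorem1}, which gives that $\{X^{\epsilon}\}_{\epsilon > 0}$ satisfies an LDP on $C([0,T], C_{\delta}(\mathbb{R}^{d}))$ with rate function $I$. Since by Proposition \ref{proposition6} the superposition solution is exactly $\mu^{\epsilon}_t = \mu_0 \circ (X^{\epsilon}_t(\cdot))^{-1}$, we have $\mu^{\epsilon} = F(X^{\epsilon})$ as a $C([0,T], \mathcal{P}_2(\mathbb{R}^{d}))$-valued random variable. The contraction principle (Theorem 1.16 in \cite{BD}) then yields that $\{\mu^{\epsilon}\}_{\epsilon>0} = \{F(X^{\epsilon})\}_{\epsilon>0}$ satisfies an LDP on $C([0,T], \mathcal{P}_2(\mathbb{R}^{d}))$ with rate function
\[
J(\phi) = \inf_{\{g \in C([0,T], C_{\delta}(\mathbb{R}^{d})):\ F(g) = \phi\}} I(g).
\]
Finally I would unwind this expression using the form of $I$ from Theorem \ref{theorem1}: $I(g) = \inf\{\frac{1}{2}\int_0^T \|h(t,\cdot)\|_{\vartheta}^2 \mathrm{d}t : g = Y^h\}$. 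Substituting and exchanging the order of the two infima (a routine identity for nested infima over the composite condition $F(Y^h) = \phi$) gives precisely
\[
J(\phi) = \inf_{\{h \in L^2([0,T], L^2(\Theta,\vartheta)):\ \phi = F(Y^h)\}} \Big\{\frac{1}{2}\int_0^T \|h(t,\cdot)\|_{\vartheta}^2\,\mathrm{d}t\Big\},
\]
with the convention $\inf\{\emptyset\} = \infty$ preserved throughout. (One should note the contraction principle also requires lower semicontinuity / goodness of $J$, but this is automatic from goodness of $I$ and continuity of $F$.)

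I do not expect a genuine obstacle here; the only point demanding care is the quantitative growth bookkeeping in the continuity of $F$ — specifically making sure the exponent $2(1+\delta)$ stays below $m$ so that the relevant moments of $\mu_0$ are finite and the coupling estimate $\mathcal{W}_2^2(F(u)(t), F(v)(t)) \leq \|u-v\|_{\infty,T}^2 \int (1+|x|^{1+\delta})^2 \mu_0(\mathrm{d}x)$ is legitimate, uniformly in $t \in [0,T]$. Everything else is a formal application of the contraction principle and an interchange of infima.
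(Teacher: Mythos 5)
Your proposal is correct and follows essentially the same route as the paper: define $F(u)(t)=\mu_0\circ(u(t,\cdot))^{-1}$, check it is a continuous map from $C([0,T],C_{\delta}(\mathbb{R}^{d}))$ to $C([0,T],\mathcal{P}_2(\mathbb{R}^{d}))$, and apply the contraction principle to Theorem \ref{theorem1}. The paper simply asserts that $F$ is continuous and cites Theorem 1.16 of \cite{BD}; you supply the missing justification (the coupling bound $\mathcal{W}_2^2(F(u)(t),F(v)(t))\le\|u-v\|_{\infty,T}^2\int(1+|x|^{1+\delta})^2\mu_0(\mathrm{d}x)$ together with the observation $m>1/\delta>2(1+\delta)$ since $\delta<1/3$), which is exactly the bookkeeping the paper elides.
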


\section{Verification of (LDP1)}\label{LDPcondition2}

For any $ N < +\infty $ and any family $\{ h^{\epsilon} \}_{ \epsilon > 0} \subset \tilde{\mathcal{H}}^N$, recall that
$ Y^{\epsilon} = \Gamma^{\epsilon}( W(\cdot) + \frac{1}{\sqrt{\epsilon}} \int_0^{\cdot} h^{\epsilon}(s)\mathrm{d}s )$ and
$ Z^{\epsilon} = \Gamma^{0}( \int_0^{\cdot} h^{\epsilon}(s)\mathrm{d}s )$. In this section, we will prove the following result.
\begin{prp}\label{Proposition5}
Let Assumption \ref{Assumption2} hold. Then for any $\gamma > 0$,
\begin{equation*}\label{vII-eq6.1}
\lim_{ \epsilon \rightarrow 0}\mathbb{P}(\| Y^{\epsilon} - Z^{\epsilon}  \|_{\infty, T} > \gamma) = 0.
\end{equation*}
\end{prp}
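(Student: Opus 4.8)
The plan is to exploit that $Y^{\epsilon}$ and $Z^{\epsilon}$ solve \emph{the same} equation apart from the single extra term $\sqrt{\epsilon}\int_0^{\cdot}\int_{\Theta}G(s,Y^{\epsilon}_s(x),\nu^{\epsilon}_s,\theta)W(\mathrm{d}\theta,\mathrm{d}s)$ (recall $Y^{\epsilon}$ solves \eqref{III-eq3.6} and $Z^{\epsilon}=\Gamma^0(\int_0^{\cdot}h^{\epsilon}(s)\mathrm{d}s)$ solves \eqref{III-eq3.2} with $h=h^{\epsilon}$), and to propagate its smallness by Gronwall estimates in which the Wasserstein terms are closed up by coupling through the fixed initial law $\mu_0$. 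As a preliminary step I would record the $\epsilon$-uniform bounds that follow along the lines of Proposition \ref{proposition1} and Lemma \ref{lemma5}: for every $p\in[2,m]$ there is a constant $C$ (independent of $\epsilon$) with $\mathbb{E}[\sup_{t\in[0,T]}|Y^{\epsilon}_t(x)|^p]\le C(1+|x|^p)$ and $\mathbb{E}[\sup_{t}|Y^{\epsilon}_t(x)-Y^{\epsilon}_t(y)|^p]\le C|x-y|^p$, the same with $Y^{\epsilon}$ replaced by $Z^{\epsilon}$ (for $Z^{\epsilon}$ these even hold as deterministic bounds, since $\int_0^T\|h^{\epsilon}(s,\cdot)\|_{\vartheta}^2\mathrm{d}s\le N$ a.s.), and hence, arguing as in Lemma \ref{lemma5}, the block bounds $\mathbb{E}[\sup_{|x|\in[k,k+1)}\sup_t|Y^{\epsilon}_t(x)|^m]\le C(1+k)^m$ and likewise for $Z^{\epsilon}$. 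The only point to watch here is the Girsanov-type drift $\int_0^t\int_{\Theta}G(s,\cdot)h^{\epsilon}(s,\theta)\vartheta(\mathrm{d}\theta)\mathrm{d}s$, which by Cauchy--Schwarz is dominated by $\sqrt{N}\big(\int_0^t\|G(s,\cdot)\|_{\vartheta}^2\mathrm{d}s\big)^{1/2}$, so the random datum $h^{\epsilon}$ never enters the Gronwall constants.

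The heart of the argument is the estimate
\[ \sup_{\epsilon\in(0,1]}\ \frac{\mathbb{E}\big[\sup_{t\in[0,T]}|Y^{\epsilon}_t(x)-Z^{\epsilon}_t(x)|^p\big]}{1+|x|^p}\ \le\ C\,\epsilon^{p/2},\qquad p\in[2,m]. \]
To prove it, subtract \eqref{III-eq3.6} and \eqref{III-eq3.2} (with $h=h^{\epsilon}$), put $\Delta^{\epsilon}_t(x):=Y^{\epsilon}_t(x)-Z^{\epsilon}_t(x)$ and $\rho^{\epsilon}_s:=\mu_0\circ(Z^{\epsilon}_s(\cdot))^{-1}$, and estimate the three resulting terms: the drift difference and the $Gh^{\epsilon}$-difference, by Assumption \ref{Assumption2}, H\"older's inequality and $\int_0^T\|h^{\epsilon}\|_{\vartheta}^2\le N$, contribute $C\int_0^t(|\Delta^{\epsilon}_s(x)|^p+\mathcal{W}_2(\nu^{\epsilon}_s,\rho^{\epsilon}_s)^p)\mathrm{d}s$; the stochastic term, by the Burkholder--Davis--Gundy inequality, Remark \ref{Remark1} and the moment bounds above, contributes $C\epsilon^{p/2}(1+|x|^p)$. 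Since $\mu_0\circ(Y^{\epsilon}_s,Z^{\epsilon}_s)^{-1}$ is a coupling of $(\nu^{\epsilon}_s,\rho^{\epsilon}_s)$, Jensen's inequality gives $\mathcal{W}_2(\nu^{\epsilon}_s,\rho^{\epsilon}_s)^p\le\int_{\mathbb{R}^d}|\Delta^{\epsilon}_s(y)|^p\,\mu_0(\mathrm{d}y)$. Writing $u_{\epsilon}(t,x)=\mathbb{E}[\sup_{r\le t}|\Delta^{\epsilon}_r(x)|^p]$ and integrating first in $x$ against $\mu_0$, a Gronwall argument gives $\int u_{\epsilon}(t,x)\,\mu_0(\mathrm{d}x)\le C\epsilon^{p/2}$; feeding this back, a second Gronwall argument in $t$ yields the displayed bound.

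To conclude I would split, for $R>0$,
\[ \|Y^{\epsilon}-Z^{\epsilon}\|_{\infty,T}\ \le\ \sup_{|x|\le R}\sup_{t}|\Delta^{\epsilon}_t(x)|\ +\ \sup_{|x|>R}\sup_{t}\frac{|Y^{\epsilon}_t(x)|+|Z^{\epsilon}_t(x)|}{1+|x|^{1+\delta}}. \]
For the tail: Chebyshev's inequality, a union bound over the shells $\{|x|\in[k,k+1)\}$, $k\ge R$, the block bounds above and $\sum_k k^{-\delta m}<\infty$ (using $m>1/\delta$) give $\sup_{\epsilon}\mathbb{P}(\sup_{|x|>R}\sup_t\frac{|Y^{\epsilon}_t(x)|+|Z^{\epsilon}_t(x)|}{1+|x|^{1+\delta}}>\gamma/2)\to0$ as $R\to\infty$. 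For the bounded part: cover $\{|x|\le R\}$ by $J=J(r)$ balls of radius $r$ with centres $x_1,\dots,x_J$, so that $\sup_{|x|\le R}\sup_t|\Delta^{\epsilon}_t(x)|\le\max_j\sup_t|\Delta^{\epsilon}_t(x_j)|+\sup_{|x-y|\le r,\,|x|,|y|\le R}\sup_t(|Y^{\epsilon}_t(x)-Y^{\epsilon}_t(y)|+|Z^{\epsilon}_t(x)-Z^{\epsilon}_t(y)|)$; by the Garsia--Rodemich--Rumsey lemma applied as in Lemma \ref{lemma5} (where $m>2d$ enters), the $m$-th moment of the second term is $\le C_R\,r^{m-2d}$ uniformly in $\epsilon$, while the $m$-th moment of $\max_j\sup_t|\Delta^{\epsilon}_t(x_j)|$ is, by the heart of the argument, $\le J\,C_R\,\epsilon^{m/2}$. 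Hence $\mathbb{E}[\sup_{|x|\le R}\sup_t|\Delta^{\epsilon}_t(x)|^m]\le C_m(C_R r^{m-2d}+J(r)C_R\epsilon^{m/2})$, which is made arbitrarily small by choosing first $r$ small and then $\epsilon$ small; Chebyshev's inequality then gives $\mathbb{P}(\sup_{|x|\le R}\sup_t|\Delta^{\epsilon}_t(x)|>\gamma/2)\to0$. Combining the two parts yields $\limsup_{\epsilon\to0}\mathbb{P}(\|Y^{\epsilon}-Z^{\epsilon}\|_{\infty,T}>\gamma)=0$.

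The main obstacle is this last step rather than the computations: the direct analogue of Lemma \ref{lemma5} for $\Delta^{\epsilon}$ only yields a block bound of order $(1+k)^m$ with \emph{no} gain in $\epsilon$, because $\Delta^{\epsilon}$ is small only pointwise in $x$; one must therefore combine the pointwise $\epsilon^{p/2}$-smallness with the $\epsilon$-uniform modulus of continuity in $x$ coming from the Garsia--Rodemich--Rumsey lemma (an equicontinuity/covering argument on compact sets), and control the range $|x|>R$ separately and uniformly in $\epsilon$ through the summability $\sum_k k^{-\delta m}<\infty$. A secondary technical point is the Gronwall scheme of the key estimate, where the Wasserstein terms have to be closed up by coupling through the single fixed initial law $\mu_0$, which forces one to control the $\mu_0$-average $\int u_{\epsilon}(t,x)\,\mu_0(\mathrm{d}x)$ before the pointwise-in-$x$ bound.
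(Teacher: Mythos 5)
Your proposal is correct and follows essentially the same route as the paper's proof: the two-stage Gronwall argument closing the $\mathcal{W}_2$-terms by coupling through $\mu_0$ to obtain $\mathbb{E}[\sup_t|Y^{\epsilon}_t(x)-Z^{\epsilon}_t(x)|^m]\le C\epsilon^{m/2}(1+|x|^m)$, a Garsia--Rodemich--Rumsey covering argument to upgrade pointwise-in-$x$ to locally uniform convergence (this is precisely Lemma~\ref{lemma4}), and a uniform-in-$\epsilon$ tail estimate over $|x|\ge M$ via block moment bounds and the summability $\sum_k k^{-\delta m}<\infty$. The only cosmetic differences are that the paper bounds $\mathbb{E}[\|Y^{\epsilon}-Z^{\epsilon}\|_{\infty,T}^m]$ and applies Chebyshev once at the outset, whereas you split probabilities directly, and in the tail the paper applies the block bound to the difference $Y^{\epsilon}-Z^{\epsilon}$ rather than to $|Y^{\epsilon}|$ and $|Z^{\epsilon}|$ separately.
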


Before proving Proposition \ref{Proposition5}, we prepare the following Lemmas.

\begin{lem}\label{lemma3}
Let Assumption \ref{Assumption2} hold. Then for any $p \in [2, m]$, there exists a constant $C_{L, p, T, N, d}$ such that for all $x, y \in \mathbb{R}^{d}$
\begin{itemize}
  \item [(i)]\label{itemize1}
  $ \sup_{\epsilon \in (0,1]}\mathbb{E}[ \sup_{t \in [0, T]}|Y^{\epsilon}_t(x)|^p ] \leq C_{L, p, T, N, d}(1 + |x|^p). $
  \item [(ii)]\label{itemize2}
  $ \sup_{\epsilon > 0}\mathbb{E}[ \sup_{t \in [0, T]}|Z^{\epsilon}_t(x)|^p ] \leq C_{L, p, T, N, d}(1 + |x|^p). $
  \item [(iii)]\label{itemize3}
  $ \sup_{\epsilon \in (0, 1]}\mathbb{E}[ \sup_{t \in [0, T]}|Y^{\epsilon}_t(x) - Y^{\epsilon}_t(y)|^p] \leq C_{L, p, T, N, d}|x -y|^p. $
  \item [(iv)]\label{itemize4}
  $ \sup_{\epsilon > 0}\mathbb{E}[ \sup_{t \in [0, T]}|Z^{\epsilon}_t(x) - Z^{\epsilon}_t(y)|^p] \leq C_{L, p, T, N, d}|x -y|^p. $
\end{itemize}
\end{lem}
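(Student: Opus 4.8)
The plan is to prove all four bounds by Grönwall-type arguments applied to the integral forms of the controlled equation \eqref{III-eq3.6} (for $Y^\epsilon$) and of the skeleton equation \eqref{III-eq3.2} with $h=h^\epsilon$ (for $Z^\epsilon$). The one genuinely structural point is that the pointwise moment of the flow is coupled to a moment of the associated measure through the identity $\mathcal{W}_2(\nu^\epsilon_s,\delta_0)^2=\int_{\mathbb{R}^d}|Y^\epsilon_s(x')|^2\mu_0(\mathrm{d}x')$, which by Jensen's inequality (as $p/2\ge1$ and $\mu_0$ is a probability measure) is bounded by $\int_{\mathbb{R}^d}\sup_{r\le s}|Y^\epsilon_r(x')|^p\mu_0(\mathrm{d}x')$. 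This lets one decouple via a two-step scheme.

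For (i): from \eqref{III-eq3.6}, for fixed $x'$ take $\sup_{t\le T}$, raise to the power $p$, and estimate the four contributions — the $V$-drift by Hölder in time and the linear growth bound of Remark \ref{Remark1}; the control term $\int_0^t\int_\Theta G\cdot h^\epsilon\,\vartheta(\mathrm{d}\theta)\,\mathrm{d}s$ by Cauchy--Schwarz in $\theta$ to produce $\||G(s,Y^\epsilon_s(x'),\nu^\epsilon_s)|\|_\vartheta\,\|h^\epsilon(s,\cdot)\|_\vartheta$ and then Hölder in time, using $\int_0^T\|h^\epsilon(s,\cdot)\|_\vartheta^2\,\mathrm{d}s\le N$ a.s.; and the stochastic term by Burkholder--Davis--Gundy, where the prefactor $\epsilon^{p/2}\le1$ (here $\epsilon\le1$ is used) keeps everything uniform. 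Integrating the resulting pointwise inequality against $\mu_0(\mathrm{d}x')$, taking expectations, and using the Jensen bound above for the Wasserstein term yields a closed inequality $g^\epsilon(t)\le C(1+\int|x'|^p\mu_0(\mathrm{d}x'))+C\int_0^t g^\epsilon(s)\,\mathrm{d}s$ with $C=C_{L,p,T,N,d}$, for $g^\epsilon(t):=\mathbb{E}[\int_{\mathbb{R}^d}\sup_{r\le t}|Y^\epsilon_r(x')|^p\mu_0(\mathrm{d}x')]$; Grönwall then bounds $\sup_{\epsilon\in(0,1]}g^\epsilon(T)$, hence $\sup_\epsilon\mathbb{E}[\sup_{t\le T}\mathcal{W}_2(\nu^\epsilon_t,\delta_0)^p]$. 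Substituting this back into the per-$x$ estimate and applying Grönwall once more gives (i). A priori finiteness of $g^\epsilon$, needed to run Grönwall, is obtained by a standard localization (stopping times $\tau_n=\inf\{t:\int_{\mathbb{R}^d}\sup_{r\le t}|Y^\epsilon_r(x')|^p\mu_0(\mathrm{d}x')\ge n\}$) together with Fatou's lemma. Estimate (ii) is proved identically, applied $\omega$-wise to \eqref{III-eq3.2} with $h=h^\epsilon(\omega)\in\mathcal{H}^N$; since there is no stochastic term, no dependence on $\epsilon$ survives beyond $h^\epsilon$, which is why (ii) (and (iv)) hold for $\sup_{\epsilon>0}$. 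For (iii) and (iv) one writes the equation for $Y^\epsilon_t(x)-Y^\epsilon_t(y)$ (resp.\ for $Z^\epsilon$): the measure argument $\nu^\epsilon_s$ is \emph{the same} in both terms, so the Wasserstein contributions cancel and the Lipschitz Assumption \ref{Assumption2} gives $|V(s,Y^\epsilon_s(x),\nu^\epsilon_s)-V(s,Y^\epsilon_s(y),\nu^\epsilon_s)|\le L|Y^\epsilon_s(x)-Y^\epsilon_s(y)|$ and similarly for $G$. Repeating the drift/control/BDG estimates leads to $D^\epsilon(t):=\mathbb{E}[\sup_{r\le t}|Y^\epsilon_r(x)-Y^\epsilon_r(y)|^p]\le C|x-y|^p+C\int_0^t D^\epsilon(s)\,\mathrm{d}s$, with a priori finiteness of $D^\epsilon$ inherited from (i) via $|a-b|^p\le2^{p-1}(|a|^p+|b|^p)$; Grönwall finishes.

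The main obstacle is precisely this self-consistent coupling in (i)–(ii): the moment of $Y^\epsilon_t(x)$ is governed by $\mathcal{W}_2(\nu^\epsilon_t,\delta_0)$, which is itself a moment of the flow, so a single Grönwall inequality in $x$ does not close; the two-step device — first close the $\mu_0$-averaged sup-moment $g^\epsilon$, then bound the pointwise one — resolves it. The only other care needed is bookkeeping the uniformity of every constant in $\epsilon\in(0,1]$ (through $\epsilon^{p/2}\le1$) and in the family $\{h^\epsilon\}\subset\tilde{\mathcal{H}}^N$ (through the bound $N$ entering via Cauchy--Schwarz on the control term). The remaining computations — the Hölder exponents, the BDG constant, and the localization — are routine.
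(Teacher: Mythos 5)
Your proposal is correct and matches the approach the paper implicitly invokes: the paper gives no details, stating only that the lemma follows by ``a minor modification of the proof of Theorem 2.9 and Theorem 2.14 in \cite{GGK},'' and what you write out is exactly that modification — the two-step Gr\"onwall closure (first the $\mu_0$-averaged sup-moment via the identity $\mathcal{W}_2(\nu^\epsilon_s,\delta_0)^2=\int|Y^\epsilon_s(x')|^2\mu_0(\mathrm{d}x')$ and Jensen, then the pointwise moment), with the additional control term handled by Cauchy--Schwarz and the a.s.\ bound $\int_0^T\|h^\epsilon\|_\vartheta^2\le N$, the noise by BDG with $\epsilon^{p/2}\le1$, the deterministic cases (ii)/(iv) pathwise for $h^\epsilon(\omega)\in\mathcal{H}^N$, and the differences (iii)/(iv) exploiting that the measure argument is common to both flows. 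No gaps; the localization for a priori finiteness is the right technicality to flag.
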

\vskip 0.4cm
The proof of this lemma is a minor modification of the proof of Theorem 2.9 and Theorem 2.14 in \cite{GGK}. We omit the details.

\begin{lem}\label{lemma4}
Let Assumption \ref{Assumption2} hold. For any $x \in \mathbb{R}^{d}$, suppose that
\begin{equation}\label{vII-eq6.17}
\lim_{\epsilon \rightarrow 0}\mathbb{E}[\sup_{t \in [0, T]}|Y^{\epsilon}_t(x) - Z^{\epsilon}_t(x)|^{m}] = 0.
\end{equation}
Then for any $M \in \mathbb{N}$
\begin{equation*}\label{vII-eq6.18}
\lim_{\epsilon \rightarrow 0}\mathbb{E}[\sup_{t \in [0, T]} \sup_{|x| \leq M }|Y^{\epsilon}_t(x) - Z^{\epsilon}_t(x)|^{m}] = 0.
\end{equation*}
\end{lem}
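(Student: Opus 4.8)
The plan is to upgrade the pointwise-in-$x$ convergence \eqref{vII-eq6.17} to the uniform-on-$\{|x|\le M\}$ convergence by a "finite net plus modulus of continuity" argument, the key input being the $\epsilon$-uniform spatial Hölder regularity of the difference process, which is a consequence of Lemma \ref{lemma3}(iii)--(iv). Write $U^{\epsilon}_t(x):=Y^{\epsilon}_t(x)-Z^{\epsilon}_t(x)$ and $\xi^{\epsilon}(x):=\sup_{t\in[0,T]}|U^{\epsilon}_t(x)|$. Since $Y^{\epsilon},Z^{\epsilon}\in C([0,T],C_{\delta}(\mathbb{R}^{d}))$, the map $(t,x)\mapsto U^{\epsilon}_t(x)$ is jointly continuous, hence $x\mapsto\xi^{\epsilon}(x)$ is continuous and $\sup_{|x|\le M}\xi^{\epsilon}(x)=\sup_{t\in[0,T]}\sup_{|x|\le M}|U^{\epsilon}_t(x)|$ is a bona fide random variable; the claim is exactly $\lim_{\epsilon\to0}\mathbb{E}[(\sup_{|x|\le M}\xi^{\epsilon}(x))^{m}]=0$. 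From $|\xi^{\epsilon}(x)-\xi^{\epsilon}(y)|\le\sup_t|U^{\epsilon}_t(x)-U^{\epsilon}_t(y)|\le\sup_t|Y^{\epsilon}_t(x)-Y^{\epsilon}_t(y)|+\sup_t|Z^{\epsilon}_t(x)-Z^{\epsilon}_t(y)|$ and Lemma \ref{lemma3}(iii)--(iv) with $p=m$, one gets a constant $C_{L,m,T,N,d}$ with $\sup_{\epsilon\in(0,1]}\mathbb{E}[|\xi^{\epsilon}(x)-\xi^{\epsilon}(y)|^{m}]\le C_{L,m,T,N,d}|x-y|^{m}$ for all $x,y\in\mathbb{R}^{d}$.

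Next I would fix $M\in\mathbb{N}$ and, using $m>2d$, pick $\gamma\in(2d/m,1)$ and set $\beta:=\gamma-2d/m\in(0,1-2d/m)$. Applying the Garsia--Rodemich--Rumsey lemma (Theorem 1.1 in \cite{WalshJohn}, exactly as in the proof of Lemma \ref{lemma5}) on the ball $B(0,M)\subset\mathbb{R}^{d}$ with $\Psi(u)=u^{m}$ and $p(u)=u^{\gamma}$, the random variable $B^{\epsilon}:=\int_{B(0,M)}\int_{B(0,M)}|\xi^{\epsilon}(x)-\xi^{\epsilon}(y)|^{m}|x-y|^{-\gamma m}\,\mathrm{d}x\,\mathrm{d}y$ controls the $\beta$-Hölder seminorm: $|\xi^{\epsilon}(x)-\xi^{\epsilon}(y)|\le C_{\gamma,m,d}(B^{\epsilon})^{1/m}|x-y|^{\beta}$ for all $x,y\in B(0,M)$. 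By Fubini's theorem and the increment bound above, $\sup_{\epsilon\in(0,1]}\mathbb{E}[B^{\epsilon}]\le C_{L,m,T,N,d}\int_{B(0,M)}\int_{B(0,M)}|x-y|^{(1-\gamma)m}\,\mathrm{d}x\,\mathrm{d}y=:\bar B_{M}<\infty$ (the exponent $(1-\gamma)m$ being nonnegative). Hence, with $K^{\epsilon}_{M}:=\sup_{x\ne y\in B(0,M)}|\xi^{\epsilon}(x)-\xi^{\epsilon}(y)|/|x-y|^{\beta}$, we obtain $\sup_{\epsilon\in(0,1]}\mathbb{E}[(K^{\epsilon}_{M})^{m}]\le C_{\gamma,m,d}\bar B_{M}=:\bar K_{M}<\infty$.

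Finally I would run the covering step. Let $\eta>0$, let $C_{m}=2^{m-1}$, and choose $r>0$ so small that $C_{m}r^{\beta m}\bar K_{M}<\eta/2$; cover $\overline{B(0,M)}$ by finitely many balls of radius $r$ centred at $x_{1},\dots,x_{K}$, with $K$ depending only on $M,r,d$. For each $x\in\overline{B(0,M)}$ pick $x_{j}$ with $|x-x_{j}|\le r$, so $\xi^{\epsilon}(x)\le\xi^{\epsilon}(x_{j})+K^{\epsilon}_{M}r^{\beta}\le\max_{1\le j\le K}\xi^{\epsilon}(x_{j})+K^{\epsilon}_{M}r^{\beta}$; taking the supremum over $x$ and $m$-th moments,
\[ \mathbb{E}\big[(\sup_{|x|\le M}\xi^{\epsilon}(x))^{m}\big]\le C_{m}\sum_{j=1}^{K}\mathbb{E}[\xi^{\epsilon}(x_{j})^{m}]+C_{m}r^{\beta m}\mathbb{E}[(K^{\epsilon}_{M})^{m}]. \]
The second term is $<\eta/2$ for every $\epsilon\in(0,1]$ by the choice of $r$; the first term is a finite sum of quantities $\mathbb{E}[\xi^{\epsilon}(x_{j})^{m}]=\mathbb{E}[\sup_t|Y^{\epsilon}_t(x_{j})-Z^{\epsilon}_t(x_{j})|^{m}]$ each tending to $0$ by the hypothesis \eqref{vII-eq6.17} applied at $x=x_{j}$, so it is $<\eta/2$ once $\epsilon$ is small. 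Thus $\limsup_{\epsilon\to0}\mathbb{E}[(\sup_{|x|\le M}\xi^{\epsilon}(x))^{m}]\le\eta$, and letting $\eta\downarrow0$ gives the conclusion. The only non-routine ingredient is the $\epsilon$-uniform bound on the Hölder seminorm $K^{\epsilon}_{M}$, but this is not really an obstacle: Lemma \ref{lemma3} already supplies $\epsilon$-uniform spatial increment estimates, and the Garsia--Rodemich--Rumsey machinery is identical to that in the proof of Lemma \ref{lemma5}; everything else is the standard reduction to a finite net.
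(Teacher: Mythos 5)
Your proposal is correct and takes essentially the same route as the paper: cover $\{|x|\le M\}$ by a finite net, use the Garsia–Rodemich–Rumsey lemma together with the $\epsilon$-uniform increment estimates of Lemma \ref{lemma3}(iii)--(iv) to control the modulus of continuity uniformly in $\epsilon$, then send the mesh to zero after $\epsilon\to0$. The only (cosmetic) differences are that you apply GRR once to the difference $\xi^{\epsilon}(x)=\sup_t|Y^{\epsilon}_t(x)-Z^{\epsilon}_t(x)|$ rather than separately to $Y^{\epsilon}$ and $Z^{\epsilon}$ as the paper does with its $I_1$, $I_3$ terms, and you carry a flexible GRR exponent $\gamma\in(2d/m,1)$ whereas the paper works directly with $\gamma=1$ (which already converges because $m>2d$).
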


\begin{proof}
Since $\{x \in \mathbb{R}^{d}: |x| \leq M  \}$ is compact, for every $\kappa \in (0, 1]$, there exist $n_{\kappa} \in \mathbb{N} $ and $\{ x_{i} \}_{i \in [n_{\kappa}]} \subset \{x \in \mathbb{R}^{d}: |x| \leq M  \}$ such that $\{x \in \mathbb{R}^{d}: |x| \leq M  \} \subset \cup _{i=1}^{n_{\kappa}} {B(x_i, \kappa)}$.
Then we have
\begin{eqnarray}\label{vII-eq6.19}
&     &\mathbb{E}[ \sup_{t \in [0, T]} \sup_{|x| \leq M} |Y^{\epsilon}_t(x) - Z^{\epsilon}_t(x)|^m ] \nonumber\\
&  =  &  \mathbb{E}[  \sup_{|x| \leq M}\sup_{t \in [0, T]} |Y^{\epsilon}_t(x) - Z^{\epsilon}_t(x)|^m ] \nonumber\\
&\leq &  \mathbb{E}[ \sup_{x \in \cup_{i=1}^{n_{\kappa}} B(x_i, \kappa)} \sup_{t \in [0, T]}|Y^{\epsilon}_t(x) - Z^{\epsilon}_t(x)|^m ] \nonumber\\
&\leq &  \mathbb{E}[ \vee_{i=1}^{n_{\kappa}}( \sup_{x \in B(x_i, \kappa )} \sup_{t \in [0, T]}|Y^{\epsilon}_t(x) - Z^{\epsilon}_t(x)|^m ) ] \nonumber\\
&\leq & 3^{m-1}\Big\{ \mathbb{E}[ \vee_{i=1}^{n_{\kappa}}( \sup_{x \in B(x_i, \kappa )} \sup_{t \in [0, T]}|Y^{\epsilon}_t(x) - Y^{\epsilon}_t(x_i)|^m )] \nonumber\\
&     & +\mathbb{E}[ \vee_{i=1}^{n_{\kappa}}(\sup_{t \in [0, T]}|Y^{\epsilon}_t(x_i) - Z^{\epsilon}_t(x_i)|^m )  ] \nonumber\\
&     & +\mathbb{E}[ \vee_{i=1}^{n_{\kappa}}( \sup_{x \in B(x_i, \kappa )} \sup_{t \in [0, T]}|Z^{\epsilon}_t(x_i) - Z^{\epsilon}_t(x)|^m )  ]\Big\} \nonumber\\
& =:  & 3^{m-1}\Big\{ I_1 + I_2 + I_3 \Big\},
\end{eqnarray}
where for each $a$, $b \in \mathbb{R}$, $a \vee b := \max\{a, b \} $.

 By $(iii)$ in Lemma \ref{lemma3}, using an argument similar to the proof of  \eqref{II-eq2.10}, we obtain that with probability one, for all $x$, $y \in \mathcal{R}_{2( M+ \kappa)}$
\begin{equation*}\label{vII-eq6.20}
     \sup_{t \in [0, T]}|Y^{\epsilon}_t(x) - Y^{\epsilon}_t(y)|
\leq \frac{8d(1/2( M+ \kappa))^{1-\frac{2d}{m}}}{1-2d/m}(B^{\epsilon, 2( M+ \kappa)})^{\frac{1}{m}}|x -y|^{1-\frac{2d}{m}},
\end{equation*}
where
\begin{equation*}\label{vII-eq6.21}
B^{\epsilon, 2( M+ \kappa)} = (\sqrt{d})^m 2^{m -2d}( M+ \kappa)^{m -2d} \int_{\mathcal{R}_{2( M+ \kappa)}}\int_{\mathcal{R}_{2( M+ \kappa)}}
                                               \frac{\sup_{t \in [0, T]}|Y^{\epsilon}_t(x) - Y^{\epsilon}_t(y)|^m}{|x -y|m}\mathrm{d}x\mathrm{d}y,
\end{equation*}
and $\sup_{\epsilon \in (0, 1]}\mathbb{E}[B^{\epsilon, 2( M+ \kappa)}] \leq C_{L, m, T, N, d} (M + \kappa)^m$.
It follows that for any $\epsilon \in (0, 1]$,
\begin{eqnarray}\label{vII-eq6.23}
         I_{1}
&\leq & \frac{8^md^m(1/2( M+ \kappa))^{m-2d}}{(1-2d/m)^m} \cdot \kappa^{m-2d} \cdot \mathbb{E}[B^{\epsilon, 2( M+ \kappa)}]\nonumber\\
&\leq & C_{L, m, T, N, d, M}\kappa^{m-2d}.
\end{eqnarray}
Similarly, using $(iv)$ in Lemma \ref{lemma3},  for any $\epsilon \in (0, 1]$, we have
\begin{equation}\label{vII-eq6.24}
I_{3} \leq  C_{L, m, T, N, d, M}\kappa^{m-2d}
\end{equation}
Obviously,
\begin{equation}\label{vII-eq6.25}
I_{2} \leq \sum_{i =1}^{n_{\kappa}}\mathbb{E}[\sup_{t \in [0, T]} | Y^{\epsilon}_t(x_i) - Z^{\epsilon}_t(x_i) |^m].
\end{equation}
Substituting \eqref{vII-eq6.23}, \eqref{vII-eq6.24} and \eqref{vII-eq6.25} into \eqref{vII-eq6.19}, we get that for any $\epsilon \in (0, 1]$
\begin{eqnarray}\label{vII-eq6.26}
&    &  \mathbb{E}[ \sup_{t \in [0, T]} \sup_{|x| \leq M} |Y^{\epsilon}_t(x) - Z^{\epsilon}_t(x)|^m ] \\
&\leq&  C_{L, m, T, N, d, M}\kappa^{m-2d} +
     3^{m-1}\sum_{i =1}^{n_{\kappa}}\mathbb{E}[\sup_{t \in [0, T]} | Y^{\epsilon}_t(x_i) - Z^{\epsilon}_t(x_i)|^m]. \nonumber\
\end{eqnarray}

Therefore, for any $\alpha > 0$, choose a sufficiently small $\kappa_0$ such that $C_{L, m, T, N, d, M}{\kappa_0}^{m-2d} < \frac{\alpha}{2}$ and then there exists $\beta \in (0, 1)$ such that for any $\epsilon \in (0, \beta)$,
\begin{equation*}\label{vII-eq6.27}
 \sum_{i =1}^{n_{\kappa_{0}}}\mathbb{E}[\sup_{t \in [0, T]} | Y^{\epsilon}_t(x_i) - Z^{\epsilon}_t(x_i) |^m] < \frac{\alpha}{2 \cdot 3^{m-1}}.
\end{equation*}
It follows from (\ref{vII-eq6.26}) that  for any $\epsilon \in (0, \beta)$,
\begin{eqnarray*}\label{vII-eq6.28}
\mathbb{E}[ \sup_{t \in [0, T]} \sup_{|x| \leq M} |Y^{\epsilon}_t(x) - Z^{\epsilon}_t(x)|^m ] < \alpha.
\end{eqnarray*}
The proof is complete.

\end{proof}
\vskip 0.4cm
\noindent {\bf Proof of Proposition \ref{Proposition5}}.
\begin{proof}
By Chebyshev's inequality, it suffices to show that
\begin{equation}\label{vII-eq6.29}
\lim_{\epsilon \rightarrow 0} \mathbb{E}[ \| Y^{\epsilon} - Z^{\epsilon}  \|_{\infty, T}^{m}] = 0.
\end{equation}
For every $M \in \mathbb{N}$, we have
\begin{eqnarray}\label{vII-eq6.2}
&   & \mathbb{E}[ \| Y^{\epsilon} - Z^{\epsilon}  \|_{\infty, T}^{m} ] \\
& = & \mathbb{E}[\sup_{t \in [0, T]}\sup_{x \in \mathbb{R}^{d}} \frac{|Y^{\epsilon}_t(x) - Z^{\epsilon}_t(x)|^m}{(1 + |x|^{1+\delta})^m}] \nonumber\\
& = & \mathbb{E}[\sup_{t \in [0, T]}\Big(\sup_{|x| < M} \frac{|Y^{\epsilon}_t(x) - Z^{\epsilon}_t(x)|^m}{(1 + |x|^{1+\delta})^m}  \vee
                              \sup_{|x|\geq M} \frac{|Y^{\epsilon}_t(x) - Z^{\epsilon}_t(x)|^m}{(1 + |x|^{1+\delta})^m}  \Big)] \nonumber\\
&\leq& \mathbb{E}[\sup_{t \in [0, T]}\sup_{|x| < M} \frac{|Y^{\epsilon}_t(x) - Z^{\epsilon}_t(x)|^m}{(1 + |x|^{1 + \delta})^m}  \vee
       \sup_{t \in [0, T]}\sup_{|x|\geq M} \frac{|Y^{\epsilon}_t(x) - Z^{\epsilon}_t(x)|^m}{(1 + |x|^{1 + \delta})^m}] \nonumber\\
&\leq& \mathbb{E}[\sup_{t \in [0, T]}\sup_{|x| < M} |Y^{\epsilon}_t(x) - Z^{\epsilon}_t(x)|^m]  +
       \mathbb{E}[\sup_{t \in [0, T]}\sup_{|x|\geq M} \frac{|Y^{\epsilon}_t(x) - Z^{\epsilon}_t(x)|^m}{(1 + |x|^{1 + \delta})^m}]. \nonumber\
\end{eqnarray}

We first prove that for every $M \in \mathbb{N}$
\begin{equation}\label{vII-eq6.11}
\lim_{\epsilon \rightarrow 0}\mathbb{E}[\sup_{t \in [0, T]}\sup_{|x| < M}|Y^{\epsilon}_t(x)-Z^{\epsilon}_t(x)|^m ]=0 .
\end{equation}
By Lemma \ref{lemma4}, it suffices to show that for each $x \in \mathbb{R}^{d}$,
\begin{equation}\label{vII-eq6.5}
\lim_{\epsilon \rightarrow 0} \mathbb{E}[\sup_{t\in [0, T]} |Y^{\epsilon}_t(x) - Z^{\epsilon}_t(x)|^m ] = 0.
\end{equation}
Using H\"older inequality, the Burkholder--Davis--Gundy inequality, Assumption \ref{Assumption2}, Remark \ref{Remark1} and the fact that $\{h^{\epsilon}\}_{\epsilon >0} \subset \mathcal{\tilde{H}}^N$, we have for every $\epsilon \in (0, 1]$ and $x \in \mathbb{R}^d$
\begin{eqnarray*}\label{vII-eq6.7}
&    &   \mathbb{E}[ \sup_{t \in [0, T]}| Y^{\epsilon}_t(x) - Z^{\epsilon}_t(x) |^m ] \nonumber\\
&\leq& 3^{m-1}\Big\{\mathbb{E}[(\int_0^T |V(s, Y^{\epsilon}_s(x), \nu^{\epsilon}_s) - V(s, Z^{\epsilon}_s(x), \mu^{\epsilon}_s)|\mathrm{d}s)^m]\nonumber\\ &    & + \mathbb{E}[(\int_0^T |\int_{\Theta} (G( s, Y^{\epsilon}_s(x), \nu^{\epsilon}_s, \theta )- G( s, Z^{\epsilon}_s(x), \mu^{\epsilon}_s, \theta)) \cdot h^{\epsilon}(s, \theta)\vartheta(\mathrm{d}\theta)| \mathrm{d}s)^m] \nonumber\\
&    & + \epsilon^{\frac{m}{2}}\mathbb{E}[\sup_{t \in [0, T]}
          |\int_0^t\int_{\Theta}G(s, Y^{\epsilon}_s(x), \nu^{\epsilon}_s,\theta)W(\mathrm{d}\theta\mathrm{d}s)|^m]\Big\}\nonumber\\
&\leq& C_{m, T}\mathbb{E}[ \int_0^T |V(s, Y^{\epsilon}_s(x), \nu^{\epsilon}_s) - V(s, Z^{\epsilon}_s(x), \mu^{\epsilon}_s)|^m \mathrm{d}s]\nonumber\\
&    & +C_{m}\mathbb{E}[ (\int_0^T \| | G( s, Y^{\epsilon}_s(x), \nu^{\epsilon}_s, \cdot )
                                   - G( s, Z^{\epsilon}_s(x), \mu^{\epsilon}_s, \cdot )| \|_{\vartheta}^2  \mathrm{d}s)^{\frac{m}{2}}
                              \cdot (\int_0^T \| h^{\epsilon}(s, \cdot) \|^2_{\vartheta}\mathrm{d}s)^{\frac{m}{2}} ] \nonumber\\
&    & +\epsilon^{\frac{m}{2}} C_{m, d}
       \mathbb{E}[(\int_0^T \| |G(s, Y^{\epsilon}_s(x), \nu^{\epsilon}_s, \cdot)| \|_{\vartheta}^2 \mathrm{d}s)^{\frac{m}{2}}] \nonumber\\
&\leq& C_{L, m, T, N}\mathbb{E}[\int_0^T |Y^{\epsilon}_s(x) - Z^{\epsilon}_s(x)|^m + \mathcal{W}_2^m(\nu^{\epsilon}_s, \mu^{\epsilon}_s)\mathrm{d}s ] \nonumber\\
&    & + \epsilon^{\frac{m}{2}}C_{L, m, T, d}\mathbb{E}[\int_0^T 1+|Y^{\epsilon}_s(x)|^m + \mathcal{W}_2^m(\nu^{\epsilon}_s, \delta_{0})\mathrm{d}s ]
\nonumber\\
&\leq&  C_{L, m, T, N}\int_0^T \mathbb{E}[\sup_{r \in [0,s]}|Y^{\epsilon}_r(x) - Z^{\epsilon}_r(x)|^m]\mathrm{d}s
                                          + C_{L, m, T, N}\int_0^T\mathbb{E}[\mathcal{W}_2^m(\nu^{\epsilon}_s, \mu^{\epsilon}_s)]\mathrm{d}s\nonumber\\
&    &  + \epsilon^{\frac{m}{2}}C_{L, m, T, d}\int_0^T \mathbb{E}[ 1 + |Y^{\epsilon}_s(x)|^m
                                          + (\int_{\mathbb{R}^{d}}|Y^{\epsilon}_s(y)|^2\mu_{0}(\mathrm{d}y))^{\frac{m}{2}}]\mathrm{d}s \nonumber\\
&\leq&   C_{L, m, T, N}\int_0^T \mathbb{E}[\sup_{r \in [0,s]}|Y^{\epsilon}_r(x) - Z^{\epsilon}_r(x)|^m]\mathrm{d}s
                                          + C_{L, m, T, N}\int_0^T\mathbb{E}[\mathcal{W}_2^m(\nu^{\epsilon}_s, \mu^{\epsilon}_s)]\mathrm{d}s\nonumber\\
&    & + \epsilon^{\frac{m}{2}}C_{L, m, T, N, d}( 1+ |x|^m), \nonumber\
\end{eqnarray*}
here, for the last step we have used $(i)$ in Lemma \ref{lemma3} and the fact that $\int_{\mathbb{R}^{d}}|y|^m\mu_{0}(\mathrm{d}y)<\infty $ .\\
By Gronwall's lemma, we get for each $\epsilon \in (0, 1]$ and $x \in \mathbb{R}^{d}$
\begin{equation}\label{vII-eq6.8}
\mathbb{E}[ \sup_{t \in [0, T]} |Y^{\epsilon}_t(x) - Z^{\epsilon}_t(x) |^m ]
\leq C_{L, m, T, N, d}( \int_0^T \mathbb{E}[\mathcal{W}_2^m(\nu^{\epsilon}_s, \mu^{\epsilon}_s)]\mathrm{d}s + \epsilon^{\frac{m}{2}}(1 + |x|^m) ).
\end{equation}
In order to bound the integral on the right side, using (\ref{vII-eq6.8}) we estimate
\begin{eqnarray*}\label{vII-eq6.9}
      \mathbb{E}[ \sup_{s \in [0, T]}\mathcal{W}_2^m(\nu^{\epsilon}_s, \mu^{\epsilon}_s)]
&\leq& \mathbb{E}[\sup_{s \in [0, T]} (\int_{\mathbb{R}^{d}} | Y^{\epsilon}_s(x) - Z^{\epsilon}_s(x) |^2 \mu_{0}(\mathrm{d}x))^{\frac{m}{2}}] \nonumber\\
&\leq& \int_{\mathbb{R}^{d}} \mathbb{E}[ \sup_{s \in [0, T]}| Y^{\epsilon}_s(x) - Z^{\epsilon}_s(x) |^m ] \mu_{0}(\mathrm{d}x) \nonumber\\
&\leq& C_{L, m, T, N, d}\int_{\mathbb{R}^{d}}( \int_0^T \mathbb{E}[\mathcal{W}_2^m(\nu^{\epsilon}_s, \mu^{\epsilon}_s)]\mathrm{d}s
                                               + \epsilon^{\frac{m}{2}}(1 + |x|^m) )\mu_{0}(\mathrm{d}x) \nonumber\\
&\leq& C_{L, m, T, N, d}\int_0^T \mathbb{E}[ \sup_{r \in [0, s]}\mathcal{W}_2^m(\nu^{\epsilon}_r, \mu^{\epsilon}_r)]\mathrm{d}s
       +  C_{L, m, T, N, d}\epsilon^{\frac{m}{2}}.  \nonumber
\end{eqnarray*}
Thus, Gronwall's lemma yields
\begin{equation*}\label{vII-eq6.10}
\mathbb{E}[ \sup_{s \in [0, T]}\mathcal{W}_2^m(\nu^{\epsilon}_s, \mu^{\epsilon}_s)] \leq C_{L, m, T, N, d}\epsilon^{\frac{m}{2}}.
\end{equation*}
Combining with the inequality above with \eqref{vII-eq6.8} and letting $\epsilon$ tend $0$, we get \eqref{vII-eq6.5}.

Next, we prove that
\begin{equation}\label{vII-eq6.13}
\lim_{M \rightarrow \infty}\sup_{\epsilon \in (0,1]}
                           \mathbb{E}[\sup_{t \in [0, T]}\sup_{|x|\geq M}\frac{|Y^{\epsilon}_t(x) - Z^{\epsilon}_t(x)|^m}{(1 + |x|^{1 + \delta})^{m}}]=0.
\end{equation}
By Lemma \ref{lemma3}, using an argument similar to that proving Lemma \ref{lemma5} shows that there exists a constant
$C_{L, m, T, N, d}$ such that for every $k \in \mathbb{N}$
\begin{equation*}\label{vII-eq6.14}
     \sup_{\epsilon \in (0, 1]}\mathbb{E}[\sup_{|x| \in [k, k+1)} \sup_{t\in [0, T]}| Y^{\epsilon}_t(x) - Z^{\epsilon}_t(x)|^m]
\leq C_{L, m, T, N, d}(1+ k)^m.
\end{equation*}
It follows that
\begin{eqnarray}\label{vII-eq6.15}
&   &\sup_{\epsilon \in (0, 1]}\mathbb{E}[\sup_{t \in [0,T]}\sup_{|x| \geq M}
                                          \frac{|Y^{\epsilon}_t(x) - Z^{\epsilon}_t(x)|^m}{(1 + |x|^{1 + \delta})^{m}}]   \nonumber\\
& = &\sup_{\epsilon \in (0, 1]}\mathbb{E}[\sup_{|x| \geq M}\sup_{t \in [0,T]}
                                          \frac{|Y^{\epsilon}_t(x) - Z^{\epsilon}_t(x)|^m}{(1 + |x|^{1 + \delta})^{m}} ]  \nonumber\\
& = &\sup_{\epsilon \in (0,1]}\mathbb{E}[\sup_{k \geq M } \sup_{|x| \in [k, k+1)} \sup_{t\in [0, T]}
                                          \frac{|Y^{\epsilon}_t(x) - Z^{\epsilon}_t(x)|^m}{(1 + |x|^{1 + \delta})^{m}} ]   \nonumber\\
&\leq& \sum_{k=M}^{\infty}\sup_{\epsilon \in (0,1]}\mathbb{E}[ \sup_{|x| \in [k, k+1)} \sup_{t\in [0, T]}
                                                              \frac{|Y^{\epsilon}_t(x) - Z^{\epsilon}_t(x)|^m}{(1 + |x|^{1 + \delta})^{m}}] \nonumber\\
&\leq& \sum_{k=M}^{\infty}\frac{ \sup_{\epsilon \in (0,1]}\mathbb{E}[\sup_{|x| \in [k, k+1)}\sup_{t\in [0, T]}
                                                               |Y^{\epsilon}_t(x) - Z^{\epsilon}_t(x)|^m]} {( 1 + k^{1+\delta} )^m} \nonumber\\
&\leq& \sum_{k=M}^{\infty}\frac{C_{L, m, T, N, d}(1+ k)^m}{( 1 + k^{1+\delta} )^m}
\leq C_{L, m, T, N, d}\sum_{k=M}^{\infty}\frac{1}{k^{{\delta}m }}.
\end{eqnarray}
Since ${\delta}m > 1$, letting $M \rightarrow \infty$ in \eqref{vII-eq6.15}, we get \eqref{vII-eq6.13}.

Now, we are in the position to complete the proof of the Proposition.
By \eqref{vII-eq6.11} and \eqref{vII-eq6.13}, letting $\epsilon \rightarrow 0 $, and then letting $M \rightarrow\infty$ in \eqref{vII-eq6.2}, we obtain
\begin{equation*}\label{vII-eq6.16}
\lim_{\epsilon \rightarrow 0}\mathbb{E}[ \| Y^{\epsilon} - Z^{\epsilon}  \|_{\infty, T}^{m} ] =0.
\end{equation*}

The proof is complete.

\end{proof}

\section{Verification of (LDP2)}\label{LDPcondition1}

In this section, we verify (LDP2), namely, we will prove the following result.
\begin{prp}\label{Proposition4}
Let Assumption \ref{Assumption2} hold. For every $N < +\infty$ and any family $\{ h^\epsilon \}_{\epsilon >0} \subset \mathcal{H}^{N}$ that converges weakly to some element $h$ in $\mathcal{H}^{N}$ as $\epsilon \rightarrow 0$,  $X^{h^\epsilon}$ converges to $X^{h}$
in the space $C([0, T], C_{\delta}(\mathbb{R}^{d}))$, where $X^h$ solve equation \eqref{III-eq3.2}, and $X^{h^\epsilon}$ solve equation \eqref{III-eq3.2} with $h$ replaced by $h^\epsilon$.
\end{prp}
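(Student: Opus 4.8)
The plan is to mimic, in the deterministic setting of the skeleton equation \eqref{III-eq3.2}, the three-layer scheme of Section~\ref{LDPcondition2}: first $\epsilon$-uniform moment and spatial-regularity estimates for the flows $X^{h^\epsilon}$, then convergence $X^{h^\epsilon}_\cdot(x)\to X^h_\cdot(x)$ in $C([0,T],\mathbb{R}^{d})$ for each fixed $x$, and finally an upgrade of this to convergence in $\|\cdot\|_{\infty,T}$ by splitting $\mathbb{R}^{d}$ into a large ball and its complement. For Step~1, since \eqref{III-eq3.2} carries no stochastic integral, the arguments behind Proposition~\ref{proposition2}, Lemma~\ref{lemma3} and Lemma~\ref{lemma5} go through with H\"older's inequality replacing the Burkholder--Davis--Gundy inequality and with the bound $\int_0^T\|h^\epsilon(s,\cdot)\|_\vartheta^2\,\mathrm{d}s\le N$ playing the role of membership in $\tilde{\mathcal H}^N$. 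This yields a constant $C_{L,m,T,N,d}$, independent of $\epsilon$, such that for all $x,y\in\mathbb{R}^{d}$ and $k\in\mathbb{N}$: $\sup_\epsilon\sup_{t\in[0,T]}|X^{h^\epsilon}_t(x)|^m\le C_{L,m,T,N,d}(1+|x|^m)$, $\sup_\epsilon\sup_{t\in[0,T]}|X^{h^\epsilon}_t(x)-X^{h^\epsilon}_t(y)|^m\le C_{L,m,T,N,d}|x-y|^m$, and, via the Garsia--Rodemich--Rumsey lemma exactly as in \eqref{II-eq2.10} and Lemma~\ref{lemma5}, $\sup_\epsilon\sup_{|x|\in[k,k+1)}\sup_{t\in[0,T]}|X^{h^\epsilon}_t(x)-X^h_t(x)|^m\le C_{L,m,T,N,d}(1+k)^m$, together with an $\epsilon$-uniform spatial modulus of continuity on bounded sets for both $X^{h^\epsilon}_\cdot(x)$ and the difference $X^{h^\epsilon}_\cdot(x)-X^h_\cdot(x)$.

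\textbf{Step 2 (pointwise convergence).} Fix $x$ and write $X^{h^\epsilon}_t(x)-X^h_t(x)$ as the sum of the drift difference $\int_0^t\big(V(s,X^{h^\epsilon}_s(x),\mu^{h^\epsilon}_s)-V(s,X^h_s(x),\mu^h_s)\big)\mathrm{d}s$, the nonlinear noise difference $\int_0^t\int_\Theta\big(G(s,X^{h^\epsilon}_s(x),\mu^{h^\epsilon}_s,\theta)-G(s,X^h_s(x),\mu^h_s,\theta)\big)h^\epsilon(s,\theta)\vartheta(\mathrm{d}\theta)\mathrm{d}s$, and the weak-convergence term $\Psi^\epsilon_x(t):=\int_0^t\int_\Theta G(s,X^h_s(x),\mu^h_s,\theta)\big(h^\epsilon(s,\theta)-h(s,\theta)\big)\vartheta(\mathrm{d}\theta)\mathrm{d}s$, where $\mu^{h^\epsilon}_s=\mu_0\circ(X^{h^\epsilon}_s(\cdot))^{-1}$. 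After raising to the $m$-th power, Assumption~\ref{Assumption2} and H\"older's inequality (using $\int_0^T\|h^\epsilon(s,\cdot)\|_\vartheta^2\mathrm{d}s\le N$) bound the first two terms by $C_{L,m,T,N}\int_0^t\big(|X^{h^\epsilon}_s(x)-X^h_s(x)|^m+\mathcal{W}_2(\mu^{h^\epsilon}_s,\mu^h_s)^m\big)\mathrm{d}s$; since $\mathcal{W}_2(\mu^{h^\epsilon}_s,\mu^h_s)^m\le\int_{\mathbb{R}^{d}}|X^{h^\epsilon}_s(y)-X^h_s(y)|^m\mu_0(\mathrm{d}y)$ (coupling by $\mu_0\circ(X^{h^\epsilon}_s,X^h_s)^{-1}$ and Jensen's inequality, $m/2\ge1$), integrating the per-$x$ inequality against $\mu_0(\mathrm{d}x)$ and applying Gronwall's lemma reduces the problem to $\int_{\mathbb{R}^{d}}\sup_{t\in[0,T]}|\Psi^\epsilon_x(t)|^m\mu_0(\mathrm{d}x)\to0$, whereupon feeding this back into the per-$x$ inequality and a second application of Gronwall give $\sup_{t\in[0,T]}|X^{h^\epsilon}_t(x)-X^h_t(x)|\to0$ for every $x$. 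To handle $\Psi^\epsilon$: for fixed $(x,t)$ the map $s\mapsto G(s,X^h_s(x),\mu^h_s,\cdot)$ restricted to $[0,t]$ lies in $L^2([0,T],L^2(\Theta,\vartheta))$ by Remark~\ref{Remark1} and Step~1, so $\Psi^\epsilon_x(t)\to0$ by the weak convergence $h^\epsilon\rightharpoonup h$; the estimate $|\Psi^\epsilon_x(t_2)-\Psi^\epsilon_x(t_1)|\le 2\sqrt N\big(\int_{t_1}^{t_2}\||G(s,X^h_s(x),\mu^h_s,\cdot)|\|_\vartheta^2\mathrm{d}s\big)^{1/2}$ gives equicontinuity uniform in $\epsilon$, hence $\sup_t|\Psi^\epsilon_x(t)|\to0$; and $\sup_t|\Psi^\epsilon_x(t)|^m\le(4N)^{m/2}\big(\int_0^T\||G(s,X^h_s(x),\mu^h_s,\cdot)|\|_\vartheta^2\mathrm{d}s\big)^{m/2}\le C_{L,T,N}(1+|x|^m)\in L^1(\mu_0)$, so dominated convergence delivers the integrated limit.

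\textbf{Step 3 (upgrade to the weighted norm).} This is the structure of the proof of Proposition~\ref{Proposition5}. For $|x|\le M$, cover the ball by finitely many balls $B(x_i,\kappa)$ and combine the $\epsilon$-uniform spatial modulus of continuity of Step~1 (for both $X^{h^\epsilon}$ and $X^h$) with the pointwise convergence of Step~2 at the centres $x_i$ to get $\limsup_{\epsilon\to0}\sup_{|x|\le M}\sup_{t\in[0,T]}|X^{h^\epsilon}_t(x)-X^h_t(x)|\le C_M\kappa^{1-2d/m}$, and then let $\kappa\to0$; this is the deterministic analogue of Lemma~\ref{lemma4}. For $|x|\ge M$, the $\epsilon$-uniform increment bound $\sup_{|x|\in[k,k+1)}\sup_t|X^{h^\epsilon}_t(x)-X^h_t(x)|^m\le C_{L,m,T,N,d}(1+k)^m$ from Step~1 yields $\sup_{\epsilon\le1}\sup_{t\in[0,T]}\sup_{|x|\ge M}\frac{|X^{h^\epsilon}_t(x)-X^h_t(x)|^m}{(1+|x|^{1+\delta})^m}\le C_{L,m,T,N,d}\sum_{k\ge M}k^{-\delta m}$, which tends to $0$ as $M\to\infty$ because $\delta m>1$. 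Since $1+|x|^{1+\delta}\ge1$, combining the two estimates and choosing first $M$ large (to control the tail uniformly over $\epsilon\le1$) and then $\epsilon$ small (to control the ball part) gives $\|X^{h^\epsilon}-X^h\|_{\infty,T}\to0$.

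\textbf{Main obstacle.} The only genuinely new ingredient beyond Section~\ref{LDPcondition2} is the treatment of the weak-convergence term $\Psi^\epsilon_x$ in Step~2: one must extract from $h^\epsilon\rightharpoonup h$ convergence that is simultaneously uniform in $t\in[0,T]$ and dominated, uniformly in $\epsilon$, by a $\mu_0$-integrable function of $x$, all while the coefficients are frozen along the moving measure $\mu^h_s$; this is exactly what forces the equicontinuity estimate and the polynomial domination above. Steps~1 and~3 are essentially deterministic reruns of estimates already established for \eqref{III-eq3.1}.
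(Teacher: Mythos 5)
Your proposal is correct and follows essentially the same three-step structure as the paper (pointwise convergence via two applications of Gronwall, a covering argument for $|x|\le M$ as in Lemma~\ref{lemma4}, and a tail bound for $|x|\ge M$), and your treatment of the weak-convergence term $\Psi^\epsilon_x$ via equicontinuity, dominated convergence and the coupling bound on $\mathcal{W}_2$ matches the paper's handling of $F_\epsilon(t,x)$. The only cosmetic difference is in the tail: the paper dispenses with the Garsia--Rodemich--Rumsey increment bound and the sum over annuli, simply writing $|X^{h^\epsilon}_t(x)-X^h_t(x)|\le C_{L,T,N}(1+|x|)$ from the (deterministic, $\epsilon$-uniform) linear-growth estimate of Lemma~\ref{lemma3}(ii), which gives $\sup_{|x|\ge M}\frac{|X^{h^\epsilon}_t(x)-X^h_t(x)|}{1+|x|^{1+\delta}}\le C_{L,T,N}/M^{\delta}$ directly; your route through $(1+k)^m$ and $\sum_{k\ge M}k^{-\delta m}$ is also valid but heavier than needed in the absence of an expectation.
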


\begin{proof}
For every $\epsilon > 0$ and $M \in \mathbb{N}$, using an argument similar to that proving \eqref{vII-eq6.2} shows that
\begin{eqnarray}\label{vI-eq5.1}
&   & \sup_{t \in [0, T]}\sup_{x \in \mathbb{R}^{d}} \frac{|X^{h^\epsilon}_t(x) - X^h_t(x)|}{1 + |x|^{1+\delta}} \nonumber\\
&\leq& \sup_{t \in [0, T]}\sup_{|x| \geq M} \frac{|X^{h^\epsilon}_t(x) - X^h_t(x)|}{1 + |x|^{1 + \delta}}  \vee
       \sup_{t \in [0, T]}\sup_{|x|< M} \frac{|X^{h^\epsilon}_t(x) - X^h_t(x)|}{1 + |x|^{1 + \delta}}\nonumber\\
&\leq& \sup_{\epsilon > 0}\sup_{t \in [0, T]}\sup_{|x| \geq M} \frac{|X^{h^\epsilon}_t(x) - X^h_t(x)|}{1 + |x|^{1 + \delta}} \vee
        \sup_{t \in [0, T]}\sup_{|x|< M}|X^{h^\epsilon}_t(x) - X^h_t(x)|\nonumber\\
& =: & I_1^{M} \vee I_2^{M, \epsilon}.
\end{eqnarray}

We first verify that
\begin{equation}\label{vI-eq5.16}
\lim_{M \rightarrow \infty}I_1^{M} = 0.
\end{equation}
By $(ii)$ in Lemma \ref{lemma3}, we have
\begin{eqnarray}\label{vI-eq5.17}
       I_1^{M}
&\leq &  \sup_{|x| \geq M}\frac{C_{L, T, N}(1+ |x|)}{1 + |x|^{1 + \delta }} \nonumber\\
&\leq & \frac{C_{L, T, N}}{ M^{ \delta }} .
\end{eqnarray}
So letting $M \rightarrow \infty$, we obtain \eqref{vI-eq5.16}.

Next, we verify that for every $M \in \mathbb{N}$,
\begin{equation}\label{vI-eq5.18}
\lim_{\epsilon \rightarrow 0}  I_2^{M, \epsilon} = 0.
\end{equation}
By $(iv)$ in Lemma \ref{lemma3} and using a similar argument as in the proof of Lemma \ref{lemma4}, we only need to show that
for every $x \in \mathbb{R}^{d}$,
\begin{equation}\label{vI-eq5.19}
\lim_{\epsilon \rightarrow 0}\sup_{t \in [0,T]}|X^{h^\epsilon}_t(x) - X^{h}_t(x)| = 0.
\end{equation}
For every $\epsilon >0$ and $x \in \mathbb{R}^{d}$, by Assumption \ref{Assumption2}, H\"older inequality and the fact that
$\{h^{\epsilon}\}_{\epsilon > 0} \subset \mathcal{H}^N$, we have
\begin{eqnarray}\label{vI-eq5.8}
&    &     \sup_{t \in [0, T]}|X^{h^\epsilon}_t(x) - X^{h}_t(x)|^2 \nonumber\\
&\leq& 3 \Big\{ ( \int_0^T |V(s, X^{h^\epsilon}_s(x), \mu^{h^\epsilon}_s) - V(s, X^{h}_s(x), \mu^{h}_s)|\mathrm{d}s )^2 \nonumber\\
&    & + ( \int_0^T |\int_{\Theta} (G(s, X^{h^\epsilon}_s(x), \mu^{h^\epsilon}_s, \theta) - G(s, X^{h}_s(x), \mu^{h}_s, \theta))
         \cdot h^{\epsilon}(\theta, s) \vartheta(\mathrm{d}\theta)| \mathrm{d}s )^2  \nonumber\\
&    & +\sup_{t \in [0, T]} | \int_0^t \int_{\Theta}G(s, X^{h}_s(x), \mu^{h}_s, \theta)
         \cdot (h^\epsilon(\theta, s) - h(\theta, s)) \vartheta(\mathrm{d}\theta)  \mathrm{d}s  |^2 \Big\} \nonumber\\
&\leq& 3T \int_0^T |V(s, X^{h^\epsilon}_s(x), \mu^{h^\epsilon}_s) - V(s, X^{h}_s(x), \mu^{h}_s)|^2 \mathrm{d}s \\
&    & +3(\int_0^T \| |G(s, X^{h^\epsilon}_s(x), \mu^{h^\epsilon}_s, \cdot) - G(s, X^{h}_s(x), \mu^{h}_s, \cdot) | \|_{\vartheta}^2 \mathrm{d}s )
          \cdot ( \int_0^T \| h^\epsilon(\cdot, s) \|_{\vartheta}^2 \mathrm{d}s ) \nonumber\\
&    & +3\sup_{t \in [0, T]} | \int_0^t \int_{\Theta}G(s, X^{h}_s(x), \mu^{h}_s, \theta)
         \cdot (h^\epsilon(\theta, s) - h(\theta, s)) \vartheta(\mathrm{d}\theta)  \mathrm{d}s  |^2.  \nonumber\\
&\leq& C_{L, T, N} \int_0^T \sup_{r \in [0, s]}| X^{h^\epsilon}_r(x) - X^{h}_r(x) |^2 \mathrm{d}s
       + C_{L, T, N} \int_0^T \mathcal{W}_2^2( \mu^{h^\epsilon}_s, \mu^{h}_s )\mathrm{d}s \nonumber\\
&    & + 3\sup_{t \in [0, T]} | \int_0^t \int_{\Theta}G(s, X^{h}_s(x), \mu^{h}_s, \theta)
         \cdot (h^{\epsilon}(\theta, s) - h(\theta, s)) \vartheta(\mathrm{d}\theta)  \mathrm{d}s  |^2.  \nonumber
\end{eqnarray}

For each $\epsilon > 0$, $t \in [0, T]$ and $x \in \mathbb{R}^{d}$, define
\[F_{\epsilon}(t, x):= | \int_0^t \int_{\Theta}G(s, X^{h}_s(x), \mu^{h}_s, \theta)
         \cdot (h^{\epsilon}(\theta, s) - h(\theta, s)) \vartheta(\mathrm{d}\theta) \mathrm{d}s  |.  \]
Then applying Gronwall's lemma to \eqref{vI-eq5.8} yields
\begin{equation}\label{vI-eq5.9}
\sup_{t \in [0, T]}| X^{h^\epsilon}_t(x) - X^{h}_t(x) |^2 \leq C_{L, T, N}(\int_0^T \mathcal{W}_2^2( \mu^{h^{\epsilon}}_s, \mu^{h}_s )\mathrm{d}s
                                                        + \sup_{t \in [0, T]}F_{\epsilon}^{2}(t, x)).
\end{equation}
Using the definition of the Wasserstein distance, we further estimate
\begin{eqnarray*}\label{vI-eq5.10}
      \sup_{t \in [0, T]} \mathcal{W}_2^2( \mu^{h^{\epsilon}}_t, \mu^{h}_t )
&\leq& \sup_{t \in [0, T]} \int_{\mathbb{R}^{d}} |X^{h^{\epsilon}}_t(x) - X^{h}_t(x)|^2 \mu_{0}(\mathrm{d}x) \nonumber\\
&\leq& \int_{\mathbb{R}^{d}} \sup_{t \in [0, T]} |X^{h^{\epsilon}}_t(x) - X^{h}_t(x)|^2 \mu_{0}(\mathrm{d}x)  \nonumber\\
&\leq& C_{L, T, N}\int_{\mathbb{R}^{d}} ( \int_0^T \mathcal{W}_2^2( \mu^{h^{\epsilon}}_s, \mu^{h}_s )\mathrm{d}s
                                                        + \sup_{t \in [0, T]}F_{\epsilon}^{2}(t, x) )\mu_{0}(\mathrm{d}x) \nonumber\\
&\leq& C_{L, T, N}\int_0^T \sup_{r \in [0, s]}\mathcal{W}_2^2( \mu^{h^{\epsilon}}_r, \mu^{h}_r )\mathrm{d}s
       + C_{L, T, N}\int_{\mathbb{R}^{d}}\sup_{t \in [0, T]}F_{\epsilon}^{2}(t, x) \mu_{0}(\mathrm{d}x). \nonumber
\end{eqnarray*}
Hence, by Gronwall's lemma
\begin{equation*}\label{vI-eq5.11}
\sup_{t \in [0, T]} \mathcal{W}_2^2( \mu^{h^\epsilon}_t, \mu^{h}_t )
\leq  C_{L, T, N}\int_{\mathbb{R}^{d}}\sup_{t \in [0, T]}F_{\epsilon}^{2}(t, x) \mu_{0}(\mathrm{d}x).
\end{equation*}
Combing the inequality above with \eqref{vI-eq5.9}, we obtain
\begin{equation}\label{vI-eq5.12}
\sup_{t \in [0, T]}| X^{h^\epsilon}_t(x) - X^{h}_t(x) |^2 \leq C_{L, T, N}(\int_{\mathbb{R}^{d}}\sup_{t \in [0, T]}F_{\epsilon}^{2}(t, x) \mu_{0}(\mathrm{d}x)
                                                        + \sup_{t \in [0, T]}F_{\epsilon}^{2}(t, x)).
\end{equation}

Now, we prove that the right-hand side of the above inequality tends to $0$ as $\epsilon$ tending to $0$.
By Remark \ref{Remark1}, the definition of the Wasserstein distance and $(ii)$ in Lemma \ref{lemma3}, we get that for every $x \in \mathbb{R}^{d}$,
\begin{eqnarray*}\label{vI-eq5.13}
        \sup_{t \in [0, T]} \| |G(t, X^{h}_t(x), \mu^{h}_t, \cdot) | \|_{\vartheta}^{2}
&\leq & C_{L}( 1 + \sup_{t \in [0 ,T]}|X^{h}_t(x)|^2 + \sup_{t \in [0, T]}\mathcal{W}^2_2(\mu^{h}_t, \delta_{0}) )  \nonumber\\
&\leq & C_{L}( 1 + \sup_{t \in [0, T]}|X^{h}_t(x)|^2 + \int_{\mathbb{R}^{d}} \sup_{t \in [0, T]}|X^{h}_t(x)|^2 \mu_{0}(\mathrm{d}x) ) \nonumber\\
&\leq & C_{L, T, N}( 1 + |x|^2 ), \nonumber
\end{eqnarray*}
which implies that $|G(t, X^{h}_t(x), \mu^{h}_t, \theta)| \in L^2([0, T], L^2(\Theta, \vartheta))$ and for any $ 0 \leq s \leq t \leq T$,
\begin{eqnarray*}\label{vI-eq5.14}
       | F_{\epsilon}(t, x) - F_{\epsilon}(s, x)|
&\leq& |\int_s^t \int_{\Theta} G(r, X^{h}_r(x), \mu^{h}_r, \theta )
       \cdot ( h^{\epsilon}(\theta, r) - h(\theta, r))\vartheta(\mathrm{d}\theta)\mathrm{d}r | \nonumber\\
&\leq& (\int_s^t \| |G(r, X^{h}_r(x), \mu^{h}_r, \cdot )| \|_{\vartheta}^2 \mathrm{d}r )^{\frac{1}{2}}
       \cdot (\int_0^T \| h^{\epsilon}(\cdot, r) - h(\cdot, r) \|_{\vartheta}^2 \mathrm{d}r )^{\frac{1}{2}} \nonumber\\
&\leq& C_{L, T, N}( 1 + |x|)(t-s)^{\frac{1}{2}}, \nonumber
\end{eqnarray*}
where we have used the facts that $h^{\epsilon}$, $h \in \mathcal{H}^N$ in the last step. In combination with the fact that $h^{\epsilon}$ converges to $h$ weakly in $L^2([0, T], L^2(\Theta, \vartheta))$, it follows that for every $x \in \mathbb{R}^{d}$,
\begin{equation}\label{vI-eq5.20}
\lim_{\epsilon \rightarrow 0}\sup_{t \in [0, T]}F_{\epsilon}^{2}(t, x) = 0.
\end{equation}
Moreover, by the dominated convergence theorem,
\begin{equation}\label{vI-eq5.21}
\lim_{\epsilon \rightarrow 0}\int_{\mathbb{R}^{d}} \sup_{t \in [0, T]}F_{\epsilon}^{2}(t, x) \mu_{0}(\mathrm{d}x) = 0.
\end{equation}
Therefore, by \eqref{vI-eq5.20} and \eqref{vI-eq5.21}, letting $\epsilon \rightarrow 0$ in \eqref{vI-eq5.12}, we get \eqref{vI-eq5.19}.

Finally, combing with \eqref{vI-eq5.16} and \eqref{vI-eq5.18}, letting $\epsilon \rightarrow 0$, and then letting $M \rightarrow \infty$ in \eqref{vI-eq5.1}, we obtain
\begin{equation*}\label{vI-eq5.22}
\lim_{\epsilon \rightarrow 0}\sup_{t \in [0, T]}\sup_{x \in \mathbb{R}^{d}} \frac{|X^{h^\epsilon}_t(x) - X^h_t(x)|}{1 + |x|^{1+\delta}}=0,
\end{equation*}
completing the proof.

\end{proof}

\end{document}